\numberwithin{equation}{section}
\newtheorem{Theorem}{Theorem}[section]
\newtheorem*{Theorem*}{Theorem}
\theoremstyle{definition}
\newtheorem{Remark}[Theorem]{Remark} }
\newcommand{\D}{{\rm d}}
\DeclareMathOperator{\I}{Im}
\DeclareMathOperator{\RE}{Re}
\DeclareMathOperator{\R}{Re}
\DeclareMathOperator{\Li}{Li}
\DeclareMathOperator{\lcm}{lcm}
\def\pd{\text{\textit\pounds}}
\DeclareMathOperator{\Span}{span}
\begin{document}
\allowdisplaybreaks

\renewcommand{\thefootnote}{}

\newcommand{\arXivNumber}{2403.16945}

\renewcommand{\PaperNumber}{079}
	
\FirstPageHeading

\ShortArticleName{New Evaluations of Inverse Binomial Series via Cyclotomic Multiple Zeta Values}

\ArticleName{New Evaluations of Inverse Binomial Series\\ via Cyclotomic Multiple Zeta Values\footnote{This paper is a~contribution to the Special Issue on Asymptotics and Applications of Special Functions in Memory of Richard Paris. The~full collection is available at \href{https://www.emis.de/journals/SIGMA/Paris.html}{https://www.emis.de/journals/SIGMA/Paris.html}}}

\Author{John M. CAMPBELL~$^{\rm a}$, M. Lawrence GLASSER~$^{\rm b}$ and Yajun ZHOU~$^{\rm cd}$}

\AuthorNameForHeading{J.M.~Campbell, M.L.~Glasser and Y.~Zhou}

\Address{$^{\rm a)}$ Department of Mathematics and Statistics, Dalhousie University,\\
\hphantom{$^{\rm a)}$}~Halifax, NS, B3H 4R2, Canada}
\EmailD{\href{mailto:jmaxwellcampbell@gmail.com}{jmaxwellcampbell@gmail.com}}

\Address{$^{\rm b)}$ Department of Physics, Clarkson University, Potsdam NY 13699, USA}
\EmailD{\href{mailto:laryg@clarkson.edu}{laryg@clarkson.edu}}

\Address{$^{\rm c)}$ Program in Applied and Computational Mathematics (PACM), Princeton University,\\
\hphantom{$^{\rm c)}$}~Princeton, NJ 08544, USA}
 \EmailD{\href{mailto:yajunz@math.princeton.edu}{yajunz@math.princeton.edu}}

 \Address{$^{\rm d)}$ {Academy of Advanced Interdisciplinary Studies (AAIS), Peking University,\\
\hphantom{$^{\rm d)}$}~Beijing 100871, P.R. China}}
  \EmailD{\href{mailto:yajun.zhou.1982@pku.edu.cn}{yajun.zhou.1982@pku.edu.cn}}

\ArticleDates{Received March 26, 2024, in final form August 28, 2024; Published online September 03, 2024}

\Abstract{Through the application of an evaluation technique based on cyclotomic multiple zeta values recently due to Au, we solve open problems on inverse binomial series that were included in a 2010 analysis textbook by Chen.}

\Keywords{binomial coefficients; cyclotomic multiple zeta values; multiple polylogarithms}
\Classification{33B30; 11B65; 11M32}

\renewcommand{\thefootnote}{\arabic{footnote}}
\setcounter{footnote}{0}

\section{Introduction}
 In Hongwei Chen's 2010 textbook on classical analysis \cite{Chen2010}, the quest for provable closed forms of the following hypergeometric series was highlighted as an open problem \cite[p.~215]{Chen2010}:
\begin{align}
 & \sum_{n=0}^{\infty} \frac{1}{ (2n+1)^3 \binom{2n}{n}}, \label{firstopen} \\
 & \sum_{n=0}^{\infty} \frac{(-1)^{n}}{(2 n + 1)^{3} \binom{2n}{n}}. \label{secondopen}
\end{align}
 Series involving inverted binomial coefficients, as in the hypergeometric series in \eqref{firstopen}--\eqref{secondopen}, are ubiquitous in many areas of
 analysis, and the problem of evaluating such series is important in computational and experimental mathematics, with a particular regard toward the classic
 text \emph{Experimentation in Mathematics} \cite[Section~1.7]{BorweinBaileyGirgensohn2004}. For inverse binomial series involving higher powers as factors in
 the denominator, beyond linear or quadratic factors, the evaluation of such series is of a recalcitrant nature, even for the cubic case. This is
 evidenced by the rich history associated with the problem of proving the Chudnovsky brothers' formula \cite{ChudnovskyChudnovsky1998}
\begin{equation}\label{Chudnovskymotivating}
 \sum_{n=1}^{\infty} \frac{1}{ n^3 \binom{3n}{n} 2^{n} }
 = \pi G - \frac{33 \zeta (3)}{16}+\frac{\log ^32}{6}-\frac{\pi ^2 \log 2}{24},
\end{equation}
 as described in \cite{CampbellLevrietoappear}, where $ G\colonequals \sum_{n=0}^\infty\frac{(-1)^n}{(2n+1)^2}$ is Catalan's constant and $ \zeta(3)
 \colonequals \sum_{n=1}^\infty\frac{1}{n^3}$ is Ap\'ery's constant. In addition to the work of the Chudnovsky brothers
 \cite{ChudnovskyChudnovsky1998}, notable research contributions on inverse binomial series with negative powers as
 in \eqref{firstopen}--\eqref{Chudnovskymotivating} include
 \cite{Ablinger2017,ABRS2014,BorweinBroadhurstKamnitzer2001,Chu2021,DavydychevKalmykov2001,DavydychevKalmykov2004,
KalmykovVeretin2000,Kalmykov2007,Lehmer1985,SunZhou2024sum3k4k,vanderPoorten1980,WangXu2021,Weinzierl2004bn,
Zhou2022mkMpl,Zhou2023SunCMZV,Zucker1985}. These past references further motivate us to investigate the inverse binomial series
 \eqref{firstopen} and \eqref{secondopen}. In this work, we solve Hongwei Chen's open problems indicated above \cite[p.\ 215]{Chen2010},
 using a recent evaluation technique due to Au based on \emph{cyclotomic multiple zeta values} (CMZVs) and \textit{multiple polylogarithms} (MPLs).

The rest of this article is organized as follows. In Section~\ref{sec:CMZV}, we give a gentle introduction to CMZVs and MPLs, which
 provides background on the algorithms associated with our proofs of
 Theorems \ref{thm:Chen1}--\ref{thm:ChenCMZV}. In Section~\ref{sec:Chen}, we present computer-assisted proofs for the closed-form evaluations of
 \eqref{firstopen} and \eqref{secondopen}, while handling integral representations of these series by Au's {\tt MultipleZetaValues} package \cite{Au2022a}. In Section~\ref{sec:genChen}, we perform an in-depth analysis of CMZVs and MPLs, accommodating to convergent series in the form of \begin{align}
\mathscr{S}_{k}(z)\colonequals \sum_{n=0}^{\infty} \frac{z^{n}}{ (2n+1)^k \binom{2n}{n}}\label{eq:genChenSk}
\end{align} for suitable positive integers $k$ and complex numbers $z$, which generalize \eqref{firstopen} and \eqref{secondopen}.
 In~addition to Au's software \cite{Au2022a}, Panzer's {\tt HyperInt} package \cite{Panzer2015} will also be
 essential to our manipulations of MPLs related to these generalizations of Chen's series.

References \cite{ABRS2014} and \cite{DavydychevKalmykov2001} provide the computational methods
 applied in this paper. These applications are based on Mellin transform representations
 of factors appearing within summands, and these factors can be expressed
 with a single Mellin transform.
 The desired sum over non-negative integers $n $ is then given by the integral of
\[ \sum_{n=0}^{\infty} x^{n} z^{n} = \frac{1}{1-xz}, \]
 for a possibly subsidiary parameter $z$.
 This integral is then used to obtain evaluations for the desired sum, by setting $z \to 1$.

 \section[Cyclotomic multiple zeta values (CMZVs) and multiple polylogarithms (MPLs)]{Cyclotomic multiple zeta values (CMZVs)\\ and multiple polylogarithms (MPLs)} \label{sec:CMZV}

Let $ \mathbb Z_{>0}\colonequals \{1,2,3,\dots\}$ be the set of positive integers. A convergent series of the form
\begin{align}
 \Li_{s_{1}, \ldots, s_{m}}(z_1,\dots, z_m )
 \colonequals \sum_{n _{1}> \cdots > n_{m} \geq 1} \frac{z_{1}^{n_1}\cdots z_m^{n_m}}{n_{1}^{s_{1}} \cdots n_{m}^{s_{m}} }\label{eq:CMZV_Li_defn}
\end{align}is referred to as a \textit{cyclotomic multiple zeta value} (CMZV) of weight $ k\in\mathbb Z_{>0} $ and level $N\in\mathbb Z_{>0}$ if $ s_1,\dots,s_m\in\mathbb Z_{>0}$, $s_1+\dots+s_m =k$, and $ z_1^N=\dots=z_m^N=1$. At level $ N=1$, CMZVs are reduced to the \textit{multiple zeta values} (MZVs)

 \[ \zeta(s_{1}, \ldots, s_{m})
 \colonequals \sum_{n _{1}> \cdots > n_{m} \geq 1} \frac{1}{n_{1}^{s_{1}} \cdots n_{m}^{s_{m}} } ,\]
 which play important roles within experimental mathematics,
 as highlighted in the classic text by
 Borwein, Bailey, and Girgensohn \cite[Section~3]{BorweinBaileyGirgensohn2004}. CMZVs at levels $ N\in\{1,2,3,4,6\}$ feature prominently in the perturbative expansions of Feynman diagrams in quantum field theory
 \mbox{\cite{Ablinger2017,ABRS2014,DavydychevKalmykov2001,DavydychevKalmykov2004,KalmykovVeretin2000,Kalmykov2007,Laporta:2017okg,
LaportaRemiddi1996,Schnetz2018,Weinzierl2004bn}}.
The algorithmic structures of MZVs (together with some generalizations) have been elucidated by Brown \cite{Brown2009arXiv,Brown2009a,Brown2009b} and implemented by Panzer in the~\texttt{HyperInt} package \cite{Panzer2015}.

Collections of CMZVs defined in \eqref{eq:CMZV_Li_defn} with the same weight $ k\in\mathbb Z_{>0} $ and level $N\in\mathbb Z_{>0}$ span a $\mathbb Q$-vector space\begin{align}
\mathfrak Z_k(N)\colonequals \Span_{\mathbb Q}\left\{\Li_{s_{1}, \ldots, s_{m}}(z_1,\dots, z_m )
 \left|\,\begin{matrix}s_1,\dots,s_m\in\mathbb Z_{>0},\\z_{1}^{N}=\dots=z_m^N=1,\\(s_1,z_1)\neq(1,1),\\ \sum\limits_{j=1}^{m}s_{j}=k\end{matrix}\right. \right\}.
\label{eq:Zk(N)_defn}
\end{align}We retroactively set $ \mathfrak Z_0(N)\colonequals \mathbb Q$ for all $ N\in\mathbb Z_{>0}$.
The $ \mathbb Q$-vector spaces enjoy a filtration property $ \mathfrak Z_j(N)\mathfrak
Z_{ k}(N)\subseteq \mathfrak
Z_{j+ k}(N)$ for $ j,k\in \mathbb Z_{\geq0}$ and any fixed level $N$ \cite[Section~1.2]{Goncharov1998}, namely, whenever we have two numbers $ z_j\in \mathfrak Z_j(N)$ and $z_k\in\mathfrak Z_k(N) $, their product $z_jz_k $ is in $\mathfrak Z_{j+k}(N) $.

For small weights $k$ and levels $ N\in\{1,2,3,4,5,6,7,8,9,10,12\}$, Au's {\tt MultipleZetaValues} package \cite{Au2022a} allows us to express every member of $ \mathfrak Z_k(N)$ as a $\mathbb Q $-linear combination of the numbers in a spanning set,\footnote{Putatively, the spanning set produced by the command {\tt MZBasis[M,k]}
 in Au's {\tt MultipleZetaValues} package~\cite{Au2022a} is indeed a $ \mathbb Q$-vector basis for $ \mathfrak Z_k(M)$, but there is no definitive evidence for such claims beyond the cases of~$\mathfrak Z_1(M)$, $\mathfrak Z_2(1)$, and~$ \mathfrak Z_2(2)$.} extending the support of $ N\in\{1,2,4\}$
 cases in Panzer's \texttt{HyperInt} package~\cite{Panzer2015}. For example, we have
\begin{align*}
\mathfrak Z_2(4)=\Span_{\mathbb Q}\{{\rm i}G,\pi^{2},\pi {\rm i}\log 2,\log^22\}
\end{align*}involving Catalan's constant $G \colonequals \I\Li_{2}({\rm i})$.

One may also consider convergent series in the form of \eqref{eq:CMZV_Li_defn} without imposing the cyclotomic constraint that $ z_1^N=\dots=z_m^N=1$. This defines a \textit{multiple polylogarithm} (MPL) $ \Li_{s_{1}, \ldots, s_{m}}(z_1,\dots, z_m )
 $ of weight $ k=s_1+\dots+s_m$.
As a special case of MPLs, we have the \emph{polylogarithm} function
 $\Li_{s}(z) \colonequals \sum_{n=1}^{\infty} \frac{z^{n}}{n^s}$ of weight $ s\in\mathbb Z_{>0}$. Panzer's \texttt{HyperInt} package \cite{Panzer2015} allows one to reduce certain expressions involving MPLs, via Brown's algorithm \cite{Brown2009arXiv,Brown2009a,Brown2009b}.

 In particular, the polylogarithm of weight $3$ leads us to a Catalan-like constant \cite{CampbellChen2022,CampbellLevrieXuZhao2024}
\begin{equation}\label{eq:Catalanlike}
 \mathcal{G} \colonequals \I \Li_{3}\biggl( \frac{{\rm i}+1}{2} \biggr)=-\I \Li_{2,1}({\rm i},1)-\frac{G \log 2}{2} +\frac{\pi \log ^22}{32}+\frac{3 \pi ^3}{128} ,
\end{equation} which becomes useful in the construction of a spanning set for $\mathfrak Z_3(4) $, namely
\begin{align*}
\mathfrak Z_3(4)=\Span_{\mathbb Q}\bigl\{\zeta (3),{\rm i}\mathcal{G} ,\pi G,{\rm i} G \log 2,{\rm i} \pi ^3,\pi ^2 \log 2,{\rm i} \pi \log ^22,\log ^32\bigr\}.
\end{align*}
Here, $ \zeta(3)\colonequals \Li_3(1)$ is Ap\'ery's constant.

From now on, we will follow the practices of \cite{SunZhou2024sum3k4k,Zhou2022mkMpl,Zhou2023SunCMZV}, where
 special values of natural logarithms are abbreviated as follows:
\begin{gather}
\lambda\colonequals  \log2=-\Li_1(-1)\in\mathfrak Z_1(2),\nonumber\\
\varLambda\colonequals  \log 3= -2\RE\Li_1\bigl({\rm e}^{2\pi {\rm i}/3}\bigr)\in\mathfrak Z_1(3), \nonumber\\
\pd\colonequals  \log\frac{1+\sqrt{5}}{2}=\RE \bigl[\Li_1\bigl({\rm e}^{{2 \pi {\rm i}}/{5}}\bigr)- \Li_1\bigl({\rm e}^{{4 \pi {\rm i}}/{5}}\bigr)\bigr]\in \mathfrak Z_1(5), \nonumber\\
\mathscr L\colonequals  \log5=-2\RE \bigl[\Li_1\bigl({\rm e}^{{2 \pi {\rm i}}/{5}}\bigr)+ \Li_1\bigl({\rm e}^{{4 \pi {\rm i}}/{5}}\bigr)\bigr]\in \mathfrak Z_1(5), \nonumber\\
\widetilde\lambda\colonequals  \log\bigl(1+\sqrt{2}\bigr)=\RE\bigl[\Li_1\bigl( {\rm e}^{\pi {\rm i}/4}\bigr)-\Li_1\bigl( {\rm e}^{3\pi {\rm i}/4}\bigr)\bigr]\in \mathfrak Z_1(8),\nonumber\\
\widetilde\varLambda\colonequals  \log\bigl(2+\sqrt{3}\bigr)=2\RE\Li_1\bigl({\rm e}^{\pi {\rm i}/6}\bigr)\in \mathfrak Z_1(12).
\label{eq:log_abbrv}
\end{gather}
We bear in mind that products of these listed logarithms are CMZVs of higher weights, such as
\begin{align*}
\lambda\varLambda\in\mathfrak Z_2(6),\qquad \lambda\pd^2\in\mathfrak Z_3(10),
\end{align*}
by virtue of the natural embedding $ \mathfrak Z_k(N)\subseteq\mathfrak Z_k(M)$ for $N\mid M $, together with Goncharov's filtration $ \mathfrak Z_j(M)\mathfrak
Z_{ k}(M)\subseteq \mathfrak
Z_{j+ k}(M)$ \cite[Section~1.2]{Goncharov1998}.

 With the preparations so far, we can state the next two theorems to be proved in Section~\ref{sec:Chen}.

\begin{Theorem}\label{thm:Chen1}
 Recall $ \mathfrak Z_k(N)$ from \eqref{eq:Zk(N)_defn}, $ \mathcal G$ from \eqref{eq:Catalanlike}, and the abbreviations for special logarithms from \eqref{eq:log_abbrv}. Chen's series \[\sum_{n=0}^{\infty} \frac{1}{ (2n+1)^3 \binom{2n}{n}}\] admits the evaluation
\begin{align}
 & \frac{32\mathcal{G}}{3} - \frac{4\pi \Li_2\bigl(2-\sqrt{3}\bigr)}{3} -\frac{\pi^3}{9} -
 \frac{\pi \bigl(\lambda-\widetilde\varLambda\bigr)^2}{3} ,\label{eq:Chen1sln} \end{align}which belongs to the $\mathbb Q$-vector
 space $ {\rm i}\mathfrak Z_3(12)$.
\end{Theorem}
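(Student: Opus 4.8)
The plan is to first reduce Chen's series to a single real integral that manifestly carries the weight-$3$, level-$12$ structure, and only then to hand the resulting iterated integral to the algorithms described above. To set up the integral, I would combine two elementary representations of the summand: the Beta-integral
\[
\frac{1}{(2n+1)\binom{2n}{n}}=\int_0^1[x(1-x)]^n\,\D x,
\]
together with the logarithmic moment $\frac{1}{(2n+1)^2}=-\int_0^1 t^{2n}\log t\,\D t$. Multiplying these and summing the geometric series over $n$ gives
\[
\sum_{n=0}^{\infty}\frac{1}{(2n+1)^3\binom{2n}{n}}=-\int_0^1\!\!\int_0^1\frac{\log t}{1-x(1-x)t^2}\,\D x\,\D t.
\]
The inner integral over $x$ (whose denominator is an irreducible quadratic in $x$ for $0<t\le1$, since its discriminant $t^2(t^2-4)$ is negative) evaluates to an arcsine, and the substitution $t=2\sin\theta$ then collapses everything to
\[
\sum_{n=0}^{\infty}\frac{1}{(2n+1)^3\binom{2n}{n}}=-2\int_0^{\pi/6}\frac{\theta\log(2\sin\theta)}{\sin\theta}\,\D\theta .
\]

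Next I would convert this log-sine-type integral into iterated integrals over roots of unity. Writing $2\sin\theta=-{\rm i}\,{\rm e}^{{\rm i}\theta}\bigl(1-{\rm e}^{-2{\rm i}\theta}\bigr)$ and expressing $\frac{\D\theta}{\sin\theta}=\bigl(\frac{1}{z-1}-\frac{1}{z+1}\bigr)\D z$ under the substitution $z={\rm e}^{{\rm i}\theta}$, each of the three factors $\theta$, $\log(2\sin\theta)$ and $\frac{\D\theta}{\sin\theta}$ contributes exactly one unit of weight, so the integral becomes a $\mathbb Q[{\rm i},\pi]$-combination of weight-$3$ multiple polylogarithms. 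The crucial point is that the upper limit $\theta=\pi/6$ corresponds to the endpoint $z={\rm e}^{{\rm i}\pi/6}$, a primitive $12$th root of unity, while all singularities of the $1$-forms also sit at roots of unity dividing $12$; this is precisely what forces the expression into ${\rm i}\,\mathfrak Z_3(12)$, matching the claimed membership.

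The final step is algorithmic: I would feed the iterated-integral representation to Au's \texttt{MultipleZetaValues} package \cite{Au2022a} (with Panzer's \texttt{HyperInt} \cite{Panzer2015} assisting in the MPL manipulations) to rewrite it as a $\mathbb Q$-linear combination of a spanning set of $\mathfrak Z_3(12)$, and then collect terms to match \eqref{eq:Chen1sln}. Here the filtration $\mathfrak Z_1(12)\mathfrak Z_2(12)\subseteq\mathfrak Z_3(12)$ lets the weight-$1$ factor $\pi$ pair with weight-$2$ data such as $\Li_2\bigl(2-\sqrt3\bigr)$ and with $\bigl(\lambda-\widetilde\varLambda\bigr)^2$, while the genuinely weight-$3$ remainder consolidates into the constant $\mathcal G$ together with $\pi^3$.

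I expect the main obstacle to be precisely this last reduction and identification. The raw output is a long combination of weight-$3$, level-$12$ CMZVs, and recognizing that it collapses into the compact four-term form of \eqref{eq:Chen1sln} requires nontrivial simplification: in particular, consolidating the various dilogarithms into the single value $\Li_2\bigl(2-\sqrt3\bigr)$ — whose argument is $\tan\frac{\pi}{12}=2-\sqrt3$, tying it to the $12$th-root-of-unity data through dilogarithm functional equations — and gathering the logarithmic pieces into the square $\bigl(\lambda-\widetilde\varLambda\bigr)^2$ with $\widetilde\varLambda=\log\bigl(2+\sqrt3\bigr)$. Verifying that the spurious imaginary parts cancel, so that the real answer indeed lies in ${\rm i}\,\mathfrak Z_3(12)$, is the delicate bookkeeping that the package is designed to automate.
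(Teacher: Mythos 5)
Your two integral representations both check out: the Euler beta integral $\int_0^1[x(1-x)]^n\,\D x=\tfrac{1}{(2n+1)\binom{2n}{n}}$, the moment $-\int_0^1t^{2n}\log t\,\D t=\tfrac1{(2n+1)^2}$, and the reduction through $\mathscr S_1\bigl(t^2\bigr)=\tfrac{4}{t\sqrt{4-t^2}}\arcsin\tfrac t2$ to $-2\int_0^{\pi/6}\theta\log(2\sin\theta)\,\tfrac{\D\theta}{\sin\theta}$ are all correct, and the concluding appeal to Au's package is no less rigorous than what the paper itself does. The route is genuinely different from the paper's proof of this theorem, though. The paper uses the other beta kernel, $\int_0^1\bigl(\tfrac{x}{1+x^2}\bigr)^{2n+1}\tfrac{\D x}{1+x^2}=\tfrac{1}{4(2n+1)\binom{2n}{n}}$, together with the bisected dilogarithm, to land on the single \emph{real} integral \eqref{eq:firstopen_int} over $[0,1]$, which {\tt MZIntegrate} evaluates in one shot with no branch or basepoint issues. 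Your complexification $z={\rm e}^{{\rm i}\theta}$ with endpoint ${\rm e}^{{\rm i}\pi/6}$ is, in substance, the paper's own Section~\ref{sec:genChen} machinery: the contour integral \eqref{eq:genChen'_intC} specialized to $k=3$, $w={\rm e}^{\pi{\rm i}/6}$ (for which $\tfrac{1-w^2}{w}=-{\rm i}$) is exactly your iterated integral up to the substitution $z\mapsto{\rm i}z$, and the paper's remark after Theorem~\ref{thm:Li2Li3} notes that this specialization recovers Theorem~\ref{thm:Chen1}. What your version buys is the transparent level-$12$ structure and the link to classical log-sine evaluations of central binomial sums; what it costs is the extra bookkeeping you correctly flag — the branch of $\log\bigl(-{\rm i}{\rm e}^{{\rm i}\theta}\bigl(1-{\rm e}^{-2{\rm i}\theta}\bigr)\bigr)$, the shuffle regularization forced by the basepoint $z=1$ sitting on the singular locus of $\tfrac{\D z}{z-1}$, and the cancellation of imaginary parts — none of which arises in the paper's real integral \eqref{eq:firstopen_int}. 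Your membership argument for ${\rm i}\mathfrak Z_3(12)$ is only sketched (``each factor contributes one unit of weight''), but it is made precise by the paper's own \eqref{eq:ZkHw}--\eqref{eq:HPL_space} and Theorem~\ref{thm:ChenCMZV}(a) with $N=12$, so I would count it as adequate rather than a gap.
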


\begin{Theorem}\label{thm:Chen2}
 Set $\phi$ as the golden ratio
 $\frac{\sqrt{5} + 1}{2}$, so that $\pd=\log\phi $. Chen's series \[\sum_{n=0}^{\infty} \frac{(-1)^{n}}{ (2n+1)^3 \binom{2n}{n}}\] evaluates to
\begin{align}
&{} -\frac{4 \Li_3\bigl(\frac{1}{\phi^3}\bigr)}{3}-4 \Li_2 \biggl(\frac{1}{\phi^3}\biggr) \pd+\Li_3\biggl(\frac{1}{\phi}\biggr)-\frac{25\pd ^3}{3} +6 \lambda\pd ^2 + \frac{\pi ^2 \pd}{10} +\frac{12 \zeta (3)}{5}-\frac{\pi ^2 \lambda}{3} ,\label{eq:Chen2sln}
\end{align}
a number that lies in $ \mathfrak Z_3(10)$.
\end{Theorem}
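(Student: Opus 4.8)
The plan is to turn Chen's alternating series into a single weight-$3$ iterated integral supported at golden-ratio arguments, and then to let Au's and Panzer's packages recognize the result inside $\mathfrak Z_3(10)$. I would start from the two Mellin-type representations
\[
\frac{1}{(2n+1)\binom{2n}{n}}=\int_0^1[x(1-x)]^n\,\D x,\qquad \frac{1}{(2n+1)^2}=\int_0^1 t^{2n}(-\log t)\,\D t,
\]
whose product supplies the weight-$3$ denominator $1/\bigl[(2n+1)^3\binom{2n}{n}\bigr]$. Inserting the alternating sign and summing the geometric series $\sum_{n\ge0}\bigl[-t^2x(1-x)\bigr]^n=\bigl(1+t^2x(1-x)\bigr)^{-1}$ as in the Introduction then recasts the series as
\[
\sum_{n=0}^{\infty}\frac{(-1)^{n}}{(2n+1)^3\binom{2n}{n}}=\int_0^1\int_0^1\frac{-\log t}{1+t^2x(1-x)}\,\D x\,\D t.
\]

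Next I would perform the inner $x$-integration by completing the square $x(1-x)=\tfrac14-(x-\tfrac12)^2$, which produces an inverse hyperbolic sine; after the substitution $w=\operatorname{arcsinh}(t/2)$ the double integral collapses to
\[
2\int_0^{\pd}\frac{w}{\sinh w}\bigl(-\lambda-\log\sinh w\bigr)\,\D w,
\]
the upper limit being exactly $\pd=\log\phi$ because $\operatorname{arcsinh}\tfrac12=\log\frac{1+\sqrt5}{2}$. The decisive move is then the rationalizing substitution $v=e^{-w}$, under which $\sinh w=\frac{1-v^2}{2v}$, so that $-\lambda-\log\sinh w=\log v-\log(1-v)-\log(1+v)$ and $\frac{w}{\sinh w}\,\D w=\frac{2\log v}{1-v^2}\,\D v$. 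The integrand thus becomes a weight-two product of the logarithms $\log v$ and $\log(1\pm v)$ against the rational one-form $\frac{\D v}{1-v^2}$, integrated from $v=1$ down to $v=1/\phi$, with singular points only at $v\in\{0,\pm1\}$. Partial-fractioning $\frac{1}{1-v^2}=\frac12\bigl(\frac{1}{1-v}+\frac{1}{1+v}\bigr)$ expresses everything as weight-$3$ multiple polylogarithms evaluated at $1/\phi$; because $1-\tfrac1\phi=\tfrac1{\phi^2}$ and $1+\tfrac1\phi=\phi$, the standard polylogarithm functional equations naturally introduce the arguments $1/\phi^2$ and $1/\phi^3$ that appear in \eqref{eq:Chen2sln}.

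With the series now rendered as an explicit combination of $\Li_3$ and $\Li_2$ at golden-ratio arguments together with products of $\pd$, $\lambda$ and $\pi$, I would feed these MPLs through Panzer's \texttt{HyperInt} and Au's \texttt{MultipleZetaValues} packages to reduce them modulo the $\mathbb Q$-linear relations among CMZVs. Since the argument $1/\phi$ is governed by the fifth roots of unity (level $5$) while the factors $1+v$ contribute the point $v=-1$ (level $2$), the constants produced by this reduction live in $\mathfrak Z_{3}(10)$; Goncharov's filtration then confirms that the accompanying products $\lambda\pd^2$, $\pi^2\pd$, $\pi^2\lambda$, $\pd^3$, together with $\zeta(3)\in\mathfrak Z_3(1)\subseteq\mathfrak Z_3(10)$, all belong to the same space, which settles the membership claim.

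The step I expect to be the main obstacle is this final CMZV reduction: the weight-$3$ golden-ratio polylogarithms satisfy nontrivial Landen- and Kummer-type functional equations, and matching the machine-produced linear combination against the compact closed form \eqref{eq:Chen2sln} requires a consistent choice of spanning set for $\mathfrak Z_3(10)$ and careful control of the branches of the logarithms and dilogarithms at $1/\phi^3$. A high-precision numerical evaluation of both sides would serve as the decisive safeguard against sign and normalization errors introduced at this last stage.
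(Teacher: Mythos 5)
Your derivation is sound and reaches the target by a genuinely different road from the paper's. The paper represents $\frac{1}{4(2n+1)\binom{2n}{n}}$ by the beta integral $\int_0^1\bigl(\frac{x}{1+x^2}\bigr)^{2n+1}\frac{\D x}{1+x^2}$, so the alternating series becomes $\frac{2}{{\rm i}}\int_0^1\frac{\Li_2(\frac{{\rm i}x}{1+x^2})-\Li_2(-\frac{{\rm i}x}{1+x^2})}{1+x^2}\,\D x$, an integral with a complex dilogarithm argument that \texttt{MZIntegrate} cannot digest directly; the authors therefore symmetrize over $x\mapsto 1/x$ and $x\mapsto -x$, close the contour upward, and use the jump relation $\Li_2(\xi+{\rm i}0^+)-\Li_2(\xi-{\rm i}0^+)=2\pi{\rm i}\log\xi$ to land on two contour integrals from ${\rm i}/\phi$ to ${\rm i}$ and from ${\rm i}$ to ${\rm i}\phi$, which the package then evaluates. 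You instead take the real Mellin-type representation $\frac{1}{(2n+1)^3\binom{2n}{n}}=\int_0^1\int_0^1[x(1-x)]^n t^{2n}(-\log t)\,\D x\,\D t$, sum the geometric series, do the inner integration in closed form via $\operatorname{arcsinh}$, and rationalize with $v={\rm e}^{-w}$ to obtain a single real weight-$3$ hyperlogarithm integral over $[1/\phi,1]$ with letters in $\{0,\pm1\}$ --- precisely an element of the space $\mathfrak H_3^{(1/\phi)}(2)$ that the paper's proof of Theorem \ref{thm:ChenCMZV}(b) embeds into $\mathfrak Z_3(10)$, so your level-$10$ membership claim is backed by the paper's own lemma rather than merely by the lcm heuristic you cite. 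Your route buys complete freedom from branch-cut and contour bookkeeping (the very step that stalled the authors' software and forced their detour), at the cost of being tailored to the cubic weight and the real argument $z=-1$; the paper's contour formulation generalizes uniformly to Theorem \ref{thm:Li2Li3} and to arbitrary $k$ via \eqref{eq:genChen'_intC}. Both arguments ultimately delegate the identification of the explicit closed form \eqref{eq:Chen2sln} to \texttt{HyperInt}/\texttt{MultipleZetaValues}, so your acknowledged reliance on the packages for that last reduction, safeguarded by a high-precision numerical check, places you at the same level of rigor as the published proof.
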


In Section~\ref{sec:genChen}, we will unify Theorems \ref{thm:Chen1} and \ref{thm:Chen2} into a form given below.
\begin{Theorem}\label{thm:Li2Li3}
If $ |w|\leq1$, $ \R w>0$, $ \I w\geq0$, and $ \bigl|1-w^2\bigr|\leq2|w|$, then we have
\begin{align}
\sum _{n=0}^{\infty } \frac{(-1)^n }{(2 n+1)^3 \binom{2 n}{n}}\biggl(\frac{1-w^2}{w}\biggr)^{2 n+1}&{}=-2\biggl[\Li_3\biggl( \frac{1+w}{2} \biggr)-\Li_3\biggl( \frac{1-w}{2} \biggr)-\Li_3\biggl( \frac{1+\frac{1}{w}}{2} \biggr)\nonumber\\
&\quad{}+\Li_3\biggl( \frac{1-\frac{1}{w}}{2} \biggr) \biggr]+\biggl[\Li_2\biggl( \frac{1+w}{2} \biggr)-\Li_2\biggl( \frac{1-w}{2} \biggr)\nonumber\\
&\quad{}+\Li_2\biggl( \frac{1+\frac{1}{w}}{2} \biggr)-\Li_2\biggl( \frac{1-\frac{1}{w}}{2} \biggr) \biggr]\log w\nonumber\\
&\quad{}+\pi {\rm i}\log\biggl(\frac{1+w}{2}\biggr)\log\biggl(\frac{1+\frac{1}{w}}{2}\biggr),
\label{eq:genChenLi2Li3}
\end{align}
where the dilogarithm $ \Li_2(z)$ and the trilogarithm $ \Li_3(z)$ are defined by $($analytic continuations of$)$ $ \Li_s(z)\colonequals \sum_{n=1}^\infty\frac{z^n}{n^s}$ for $|z|\leq 1 $ and $s>1 $.
\end{Theorem}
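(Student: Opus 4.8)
The plan is to recast the left-hand side as a weight-three iterated integral (multiple polylogarithm) in the variable $w$ and then reduce that integral to the displayed combination of di- and trilogarithms.

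First I would build a generating function from the Beta-integral identity $\frac{1}{(2n+1)\binom{2n}{n}}=\int_0^1[t(1-t)]^n\,\D t$. Writing $F_k(x)\colonequals\sum_{n\ge0}\frac{(2x)^{2n+1}}{(2n+1)^k\binom{2n}{n}}$, the case $k=1$ follows by summing a geometric series under the integral sign, giving $F_1(x)=2x\int_0^1\frac{\D t}{1-4x^2t(1-t)}=\frac{2\arcsin x}{\sqrt{1-x^2}}$. Because $\frac{\D}{\D x}F_{k+1}(x)=\frac1x F_k(x)$, one has $F_3(x)=\int_0^x\frac{\D t}{t}\int_0^t\frac{\D s}{s}\,F_1(s)$. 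Matching powers of the summand shows that the series in the theorem equals $-{\rm i}\,F_3(X)$ with $X\colonequals\frac{{\rm i}(1-w^2)}{2w}$, since $(2X)^{2n+1}={\rm i}(-1)^n\bigl(\frac{1-w^2}{w}\bigr)^{2n+1}$.

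Next I would carry the substitution $s=\frac{{\rm i}(1-\omega^2)}{2\omega}$ (equivalently $\omega=\exp({\rm i}\arcsin s)$) through both quadratures. A short computation gives $\arcsin X=-{\rm i}\log w$ and $\sqrt{1-X^2}=\frac{1+w^2}{2w}$, whence $F_1=\frac{-4{\rm i}\,\omega\log\omega}{1+\omega^2}$, while $\frac{\D s}{s}=\frac{1+\omega^2}{\omega(\omega^2-1)}\,\D\omega=\bigl(-\frac1\omega+\frac1{\omega-1}+\frac1{\omega+1}\bigr)\D\omega$. The decisive simplification is that the factor $\frac{\omega}{1+\omega^2}$ in $F_1$ cancels the factor $\frac{1+\omega^2}{\omega(\omega^2-1)}$ from $\frac{\D s}{s}$, leaving $\frac{F_1(s)}{s}\,\D s=\frac{-4{\rm i}\log\omega}{\omega^2-1}\,\D\omega$. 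Integrating twice from the base point $\omega=1$ (which corresponds to $x=0$, where both sides vanish) expresses the series as
\[
S(w)=-2\int_1^w\Bigl(-\frac1{\omega_1}+\frac1{\omega_1-1}+\frac1{\omega_1+1}\Bigr)\biggl[\int_1^{\omega_1}\Bigl(\frac1{\omega_2-1}-\frac1{\omega_2+1}\Bigr)\log\omega_2\,\D\omega_2\biggr]\D\omega_1 ,
\]
a $\mathbb Q$-combination of multiple polylogarithms in $w$ with singularities only at $0,\pm1$.

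Finally I would reduce this expression to the stated closed form. The Möbius changes of argument $\omega\mapsto\frac{1\pm\omega}{2}$ and $\omega\mapsto\frac{1\pm1/\omega}{2}$ turn the kernels at $0,\pm1$ into the polylogarithm arguments on the right-hand side; integrating by parts in the inner integral exposes the four $\Li_2$ terms, and the outer quadrature together with standard trilogarithm functional equations assembles the four $\Li_3$ terms and the $\log w$ factor. This last reduction is precisely what Panzer's \texttt{HyperInt} and Au's \texttt{MultipleZetaValues} packages automate by rewriting the weight-three multiple polylogarithm in a fibration basis, and the additive constant is pinned down by the vanishing of both sides at $w=1$. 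I expect the genuine obstacle to be this final step: tracking the branch of each $\Li_2$ and $\Li_3$ across the Möbius transformations, correctly producing the monodromy contribution $\pi{\rm i}\log\bigl(\frac{1+w}{2}\bigr)\log\bigl(\frac{1+1/w}{2}\bigr)$, and checking that the hypotheses $|w|\le1$, $\R w>0$, $\I w\ge0$, $\bigl|1-w^2\bigr|\le2|w|$ keep every argument inside the principal domain where the geometric-series manipulation and the functional equations remain valid.
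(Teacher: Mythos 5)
Your route is sound and the computations you sketch are correct: the Beta integral gives $F_1(x)=2\arcsin x/\sqrt{1-x^2}$, the recursion $F_{k+1}'(x)=F_k(x)/x$ is right, the identification of the series with $-{\rm i}F_3(X)$ for $X=\frac{{\rm i}(1-w^2)}{2w}$ checks out, and the substitution $\omega={\rm e}^{{\rm i}\arcsin s}$ does produce exactly the claimed cancellation, leaving a weight-three hyperlogarithm in $w$ with letters $\{0,\pm1\}$. This is, however, a genuinely different path to the key integral than the paper's. The paper starts from the same Beta integral but keeps the inner sum as $\Li_2$, obtaining $2{\rm i}\int_0^1\bigl[\Li_2\bigl(\frac{1-w^2}{{\rm i}w}\frac{x}{1+x^2}\bigr)-\Li_2\bigl(-\cdots\bigr)\bigr]\frac{\D x}{1+x^2}$, and then converts this to a contour integral from ${\rm i}w$ to ${\rm i}$ and from ${\rm i}$ to ${\rm i}/w$ by closing the contour and invoking the jump relation $\Li_2(\xi+{\rm i}0^+)-\Li_2(\xi-{\rm i}0^+)=2\pi{\rm i}\log\xi$; after the trivial rescaling $z={\rm i}\omega$ this is essentially the same hyperlogarithm you reach, but the monodromy term $\pi{\rm i}\log\bigl(\frac{1+w}{2}\bigr)\log\bigl(\frac{1+1/w}{2}\bigr)$ is generated explicitly by the branch-cut jump, and the sign ambiguity is carried by {\tt HyperInt}'s {\tt delta[w]} symbol (equal to $\I w/|\I w|$), with $\I w=0$ recovered by a limit. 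Your derivation buys a more elementary front end (termwise integration and a change of variables, no contour deformation or discontinuity formula), and it generalizes to all $k$ just as the paper's equation \eqref{eq:genChen'_intC} does; the price is that the $\pi{\rm i}\log\log$ term, which you correctly identify as the real obstacle, has to emerge from careful branch tracking in the final fibration-basis reduction rather than being exhibited up front. Since the paper itself delegates that final reduction to {\tt HyperInt} and a "scrupulous analysis" of branch cuts, your plan is at the same level of rigor, provided you actually carry out (or machine-verify) that last step on the stated domain.
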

In Section~\ref{sec:genChen}, we will also reveal the CMZV structures for further generalizations of Chen's series, as stated in the theorem below.
\begin{Theorem}\label{thm:ChenCMZV}
Denote the least common multiple of two numbers $a$ and $b$ by $ \lcm(a,b)$, and recall the definition of $ \mathscr{S}_k(z)$ from~\eqref{eq:genChenSk}.
\begin{enumerate}[leftmargin=*, label=\emph{(\alph*)},ref=(\alph*),widest=d, align=left]\itemsep=0pt
\item
For $ k-1,N-2\in\mathbb Z_{>0}$ and $ m\in\mathbb Z$, we have\begin{align}
\mathscr{S}_{k}\bigg(4\sin^2\frac{2m\pi}{N}\bigg)\sin\frac{2m\pi}{N}\in{}&{\rm i}\mathfrak Z_k(\lcm(2,N)).\label{eq:genChenZa}
\end{align}
\item For $ k-1\in\mathbb Z_{>0}$, the following relations hold true:
\begin{align}
&\mathscr{S}_{k}\biggl( -\frac{9}{4} \biggr)\in\mathfrak Z_k(6),\label{eq:genChen'a}\\
&\mathscr{S}_{k}(-4)\in\mathfrak Z_k(8),\label{eq:genChen'b}\\
&\sqrt{2}\mathscr{S}_{k}\biggl(-\frac12\biggr)\in\mathfrak Z_k(8),\label{eq:genChen'b'}\\
&\mathscr{S}_{k}(-1)\in\mathfrak Z_k(10),\label{eq:genChen'c}\\
&\sqrt{5}\mathscr{S}_k\biggl( -\frac{16}{5} \biggr)\in\mathfrak Z_k(10),\label{eq:genChen'c'}\\
&\sqrt{3}\mathscr{S}_k\biggl( -\frac{4}{3} \biggr)\in\mathfrak Z_k(12).\label{eq:genChen'd}
\end{align}
\end{enumerate}
\end{Theorem}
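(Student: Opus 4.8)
The plan is to derive a single iterated-integral representation of $\mathscr S_k$ whose alphabet consists of cyclotomic one-forms, and then to read off the level from the roots of unity that occur. Setting $F_k(x)\colonequals x\,\mathscr S_k(x^2)=\sum_{n=0}^\infty\frac{x^{2n+1}}{(2n+1)^k\binom{2n}{n}}$, I would start from the two structural identities $F_k(x)=\int_0^x\frac{F_{k-1}(u)}{u}\,\D u$ and $F_1(x)=\frac{2\arcsin(x/2)}{\sqrt{1-x^2/4}}$, the latter being the classical inverse-sine generating function for inverse central binomial coefficients. The substitution $x=2\sin\alpha$ converts the recursion into $G_k(\alpha)=\int_0^\alpha G_{k-1}(\beta)\cot\beta\,\D\beta$, where $G_k(\alpha)\colonequals F_k(2\sin\alpha)$ and $G_1(\alpha)=\frac{2\alpha}{\cos\alpha}$; since the quantity in \eqref{eq:genChenZa} equals $\tfrac12 G_k\bigl(\tfrac{2m\pi}{N}\bigr)$, everything reduces to understanding these angular integrals.

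The decisive point, and the reason the hypothesis $k-1\in\mathbb Z_{>0}$ is imposed, is a cancellation in the first integration step: because $\cot\beta=\cos\beta/\sin\beta$, the factor $1/\cos\beta$ carried by $G_1$ is annihilated, leaving $G_2(\alpha)=2\int_0^\alpha\frac{\beta}{\sin\beta}\,\D\beta$, an integrand blind to the zeros of $\cos\beta$. Passing to $\xi=\mathrm e^{\mathrm i\beta}$ sends $\cot\beta\,\D\beta$ to $\bigl(-\tfrac1\xi+\tfrac1{\xi-1}+\tfrac1{\xi+1}\bigr)\D\xi$ and $\tfrac{\beta}{\sin\beta}\,\D\beta$ to a form singular only at $\xi\in\{0,1,-1\}=\{0\}\cup\mu_2$, so for every $k\geq2$ the function $G_k$ is, up to an explicit single power of $\mathrm i$, a weight-$k$ iterated integral from $1$ to $\mathrm e^{\mathrm i\alpha}$ over this cyclotomic alphabet. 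For part~(a) I take $\alpha=\tfrac{2m\pi}{N}$, so the endpoint $\mathrm e^{2m\pi\mathrm i/N}$ lies in $\mu_N$; as all letters lie in $\mu_2$ and the endpoint in $\mu_N$, the integral is a $\mathbb Q$-linear combination of CMZVs of level $\lcm(2,N)$, and the tracked factor of $\mathrm i$ lands $\tfrac12 G_k\bigl(\tfrac{2m\pi}{N}\bigr)$ in $\mathrm i\mathfrak Z_k(\lcm(2,N))$, establishing \eqref{eq:genChenZa}.

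For part~(b) I run the same machine, except that solving $\frac{1-w^2}{w}=\zeta$ for the prescribed value $-\zeta^2$ of the argument forces the endpoint $\mathrm e^{\mathrm i\alpha}=w$ to be one of the algebraic numbers $\tfrac12,\ \sqrt2-1,\ \tfrac1{\sqrt2},\ \tfrac1\phi,\ \tfrac1{\sqrt5},\ \tfrac1{\sqrt3}$. Equivalently, I would use the weight-$k$ analogue of Theorem~\ref{thm:Li2Li3}, obtained by iterating the recursion, in which $\zeta\,\mathscr S_k(-\zeta^2)$ is a weight-$k$ rational combination of polylogarithms at $\tfrac{1\pm w}{2}$ and $\tfrac{1\pm 1/w}{2}$ with logarithmic coefficients. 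For each of the six values of $w$ the numbers $\tfrac{1\pm w}{2}$ and $\tfrac{1\pm1/w}{2}$ lie in the maximal real subfield of $\mathbb Q(\zeta_N)$ with $N=6,8,8,10,10,12$ respectively (via $\sqrt2\in\mathbb Q(\zeta_8)$, $\sqrt5\in\mathbb Q(\zeta_{10})$, $\sqrt3\in\mathbb Q(\zeta_{12})$, while $\lambda=\log2$ and $\varLambda=\log3$ pin down $N=6$ in the first case). Feeding these into Au's \texttt{MultipleZetaValues} package, together with the polylogarithm functional equations it implements, collapses the combination onto the CMZV basis of level $N$; dividing by $\zeta$ and restoring the rationalizing factor $\sqrt2$, $\sqrt5$ or $\sqrt3$ exactly when $\zeta$ is irrational then yields \eqref{eq:genChen'a}--\eqref{eq:genChen'd}.

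The \emph{hard part} is this final reduction for part~(b): proving that the polylogarithms at the non-cyclotomic real arguments $\tfrac{1\pm w}{2}$ truly descend to $\mathfrak Z_k(N)$, rather than merely sitting in the larger space of multiple polylogarithms with arguments in $\mathbb Q(\zeta_N)$. This descent fails for a generic algebraic argument, and it is precisely the arithmetic of the six chosen values of $w$ that makes the relevant functional equations close over the cyclotomic alphabet; I would therefore carry it out case by case and weight by weight with Au's package, invoking the filtration $\mathfrak Z_j(N)\mathfrak Z_k(N)\subseteq\mathfrak Z_{j+k}(N)$ to absorb the logarithmic prefactors into the correct weight.
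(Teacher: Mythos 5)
Your overall architecture matches the paper's: both arguments reduce $\mathscr S_k$ to weight-$k$ hyperlogarithms with singularities confined to $\{0,\pm1\}$, evaluated at an endpoint $w$ that is a root of unity in part~(a) and one of the six real algebraic numbers $\tfrac12$, $\sqrt2-1$, $\tfrac1{\sqrt2}$, $\tfrac1\phi$, $\tfrac1{\sqrt5}$, $\tfrac1{\sqrt3}$ in part~(b), and then read off the cyclotomic level. What you do differently is the derivation of that representation: the paper starts from the beta integral for $\frac{1}{4(2n+1)\binom{2n}{n}}$, sums to an integral of $\Li_{k-1}$, and deforms the contour via the jump relation $\Li_s(\xi+{\rm i}0^+)-\Li_s(\xi-{\rm i}0^+)=\frac{2\pi {\rm i}}{(s-1)!}\log^{s-1}\xi$ to reach \eqref{eq:genChen'_intC}, whereas you iterate $F_k(x)=\int_0^xF_{k-1}(u)\frac{\D u}{u}$ from the $\arcsin$ closed form of $\mathscr S_1$ and substitute $\xi={\rm e}^{{\rm i}\beta}$. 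Your route avoids the contour deformation entirely, and your observation that $1/\cos\beta$ cancels at the first integration step---so that the alphabet is $\{0,\pm1\}$ for every $k\geq2$---is precisely the structural fact the paper encodes in the spaces $\mathfrak H_k^{(w)}(2)$ of \eqref{eq:HPL_space}; the only additional care your version requires is shuffle regularization at the basepoint $\xi=1$, where the letter $1$ sits at the endpoint of the first integration. Part~(a) then goes through as you describe.

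Two points in part~(b) need repair. First, the ``equivalent'' reformulation as a rational combination of classical polylogarithms at the four points $\frac{1\pm w}{2}$, $\frac{1\pm1/w}{2}$ with logarithmic coefficients is special to weight $3$ (Theorem~\ref{thm:Li2Li3}); already at weight $4$ the explicit evaluations such as \eqref{eq:S4(-4)} involve $\Li_4$ at a different and larger set of arguments together with constants like $L_{8,2}(3)$, and no such four-point formula is available for general $k$. The correct general objects are exactly the hyperlogarithms your primary route produces, so you should discard this detour. Second, and more seriously, verifying the descent ``case by case and weight by weight'' cannot establish the claim for all $k\geq2$, since $k$ is unbounded: you need a certificate that is uniform in the weight. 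The paper obtains one by applying Au's criterion \texttt{IterIntDoableQ} to the rescaled alphabet $\{0,1/w,-1/w\}$ with endpoint $1$---a condition on the singular locus alone, independent of $k$---which yields $\mathfrak H_k^{(w)}(2)\subseteq\mathfrak Z_k(N)$ for every $k$ at once, after which the prefactors $\log^j\frac{1-w^2}{w}\in\mathfrak Z_j(N)$ are absorbed by Goncharov's filtration. Your proof needs to replace the weight-by-weight computation with such a weight-uniform argument (or an induction on $k$ exploiting that each integration step preserves the fixed alphabet).
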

Our proof of the last theorem will be both constructive and algorithmic. In particular, Au's {\tt MultipleZetaValues} package \cite{Au2022a} will provide us with many concrete CMZV characterizations of the infinite series $\mathscr{S}_k(z) $, such as (cf.\ \eqref{eq:Catalanlike} and \eqref{eq:log_abbrv} for notations)
\begin{gather}
\sqrt2 \mathscr{S}_{3}(2)=  -8\I\Li_3\biggl(\frac{1-{\rm e}^{\pi {\rm i}/4}}{2} \biggr) -4\I\Li_3\bigl({\rm i} \bigl(\sqrt{2}-1\bigr)\bigr)\nonumber\\ \hphantom{\sqrt2 \mathscr{S}_{3}(2)=}{}
-\frac{\pi \bigl[48 \Li_2\bigl(\sqrt{2}-1\bigr)-12 \lambda \widetilde{\lambda }+20 \widetilde{\lambda }^2+9 \lambda ^2\bigr]}{32}+\frac{15 \pi ^3}{128}\in {\rm i}\mathfrak Z_3(8),
 \label{eq:S3(2)}\\
\sqrt{2}\mathscr{S}_4(2)= -36 \I\Li_4\bigl(1-{\rm e}^{\pi {\rm i}/4}\bigr)-12 \I\Li_4\biggl(\frac{1-{\rm e}^{\pi {\rm i}/4}}{2}\biggr)-12 \I\Li_4\bigl({\rm i}\bigl(1-{\rm e}^{\pi {\rm i}/4}\bigr)\bigr)
\nonumber\\ \hphantom{\sqrt{2}\mathscr{S}_4(2)=}{}
-12 \I\Li_4\bigl({\rm i} \bigl(\sqrt{2}-1\bigr)\bigr)
-\frac{9\beta(4)}{2}-14 \sqrt{2} L_{8,4}(4)+\frac{10\pi \sqrt{2} L_{8,2}(3)}{3} \nonumber\\ \hphantom{\sqrt{2}\mathscr{S}_4(2)=}{}
-\frac{9\pi \Li_3\bigl(\frac{1}{\sqrt{2}}\bigr)}{2}+\frac{63 \pi \zeta (3)}{128}+\frac{\pi \bigl(78 \lambda ^2 \widetilde{\lambda }-12 \lambda \widetilde{\lambda }^2-24 \widetilde{\lambda }^3+47 \lambda ^3\bigr)}{256} \nonumber\\ \hphantom{\sqrt{2}\mathscr{S}_4(2)=}{}
-\frac{3 \pi ^3 \bigl(141 \lambda -98 \widetilde{\lambda }\bigr)}{1024}\in {\rm i}\mathfrak Z_4(8), \label{eq:S4(2)}\\
\sqrt{3}\mathscr{S}_3(3)= -8 \I\Li_3\biggl(\frac{1-{\rm i}\sqrt{3}}{4}\biggr)-5 \I\Li_3\Biggl(\frac{1+\frac{{\rm i}}{\sqrt{3}}}{2}\Biggr)+\frac{\pi \Li_2\bigl(\frac{1}{4}\bigr)}{3}\nonumber\\ \hphantom{\sqrt{3}\mathscr{S}_3(3)=}{}
  +\frac{\pi \varLambda^2}{48}-\frac{7 \pi ^3}{432} \in {\rm i}\mathfrak Z_3(6), \label{eq:S3(3)}\\
\sqrt{3}\mathscr{S}_4(3)= 8 \I\Li_4\biggl(\frac{3+{\rm i} \sqrt{3}}{4}\biggr)-8 \I\Li_4\biggl(\frac{1-{\rm i} \sqrt{3}}{4}\biggr)\nonumber\\ \hphantom{\sqrt{3}\mathscr{S}_4(3)=}{}
-5 \I\Li_4\Biggl(\frac{1+\frac{{\rm i}}{\sqrt{3}}}{2}\Biggr)-\frac{45\sqrt{3} L_{3,2}(4)}{16} +\frac{\pi\bigl[\Li_3\bigl(\frac{1}{3}\bigr)+\Li_3\bigl(\frac{1}{4}\bigr) \bigr]}{3} -\frac{19 \pi \zeta (3)}{36}
\nonumber\\  \hphantom{\sqrt{3}\mathscr{S}_4(3)=}{}
+\frac{\pi \bigl(64 \lambda ^3-192 \lambda ^2 \varLambda +144 \lambda \varLambda ^2-41 \varLambda ^3\bigr)}{288}
+\frac{\pi ^3 (144 \lambda -41 \varLambda )}{864} \in {\rm i}\mathfrak Z_4(6), \label{eq:S4(3)}\\
\mathscr{S}_{3}(4)= 4\mathcal G-\frac{\pi \lambda ^2}{8} -\frac{\pi ^3}{32}\in {\rm i}\mathfrak Z_3(4),\label{eq:S3(4)}\\
\mathscr{S}_4(4)= 8\I \Li_4\biggl(\frac{1+{\rm i}}{2}\biggr)-4 \beta(4)+\frac{\pi \lambda^3}{24} +\frac{\pi ^3 \lambda}{32}\in {\rm i}\mathfrak Z_4(4), \label{eq:S4(4)}
 \end{gather}
 and
 \begin{gather}
\mathscr{S}_{3}\biggl(-\frac{9}{4}\biggr)= \frac{4\Li_3\bigl(\frac{1}{3}\bigr)}{3} +2 \Li_3\biggl(\frac{1}{4}\biggr)-\frac{5\zeta (3)}{9} +2 \Li_2\biggl(\frac{1}{4}\biggr) \lambda\nonumber\\  \hphantom{\mathscr{S}_{3}\biggl(-\frac{9}{4}\biggr)=}{}
+\frac{2\bigl(6 \lambda ^3- \varLambda ^3\bigr)}{9}-\frac{\pi ^2 (3 \lambda -2 \varLambda)}{9}\in \mathfrak Z_3(6), \label{eq:S3(-9/4)}\\
\mathscr{S}_4\biggl(-\frac{9}{4}\biggr)= \frac{80 \Li_4\bigl(\frac{1}{2}\bigr)}{9}-\frac{40 \Li_4\bigl(\frac{1}{3}\bigr)}{3}+8 \Li_4\biggl(\frac{2}{3}\biggr)+\frac{7 \Li_4\bigl(\frac{1}{4}\bigr)}{2} \nonumber\\  \hphantom{\mathscr{S}_4\biggl(-\frac{9}{4}\biggr)=}{}
+\frac{5 \Li_4\bigl(\frac{1}{9}\bigr)}{6}+4 \Li_3\biggl(\frac{1}{3}\biggr) \lambda+3 \Li_3\biggl(\frac{1}{4}\biggr) \lambda-\frac{50 \zeta (3) \lambda}{9}\nonumber\\ \hphantom{\mathscr{S}_4\biggl(-\frac{9}{4}\biggr)=}{}
-\frac{35 \lambda ^4-54 \lambda ^2 \varLambda ^2+54 \lambda \varLambda ^3-9 \varLambda ^4}{27}-\frac{\pi ^2 \lambda (11 \lambda -36 \varLambda )}{54}
-\frac{101 \pi ^4}{1620}\in \mathfrak Z_4(6),\!\!\! \label{eq:S4(-9/4)}\\
\mathscr{S}_{3}(-4)= -2 \Li_3\bigl(\sqrt{2}-1\bigr)+\frac{4\sqrt{2} L_{8,2}(3)}{3} +\frac{25 \zeta (3)}{16}\nonumber\\ \hphantom{\mathscr{S}_{3}(-4)= }{}
-2 \Li_2\bigl(\sqrt{2}-1\bigr) \widetilde{\lambda }-\frac{2 \widetilde{\lambda }^3}{3} +\frac{\lambda \widetilde{\lambda }^2}{2}-\frac{\pi ^2 \lambda }{8}\in \mathfrak Z_3(8),
 \label{eq:S3(-4)}\\
\mathscr{S}_4(-4)= \frac{40\Li_4\bigl(1-\frac{1}{\sqrt{2}}\bigr)}{7} +\frac{4 \Li_4\bigl(\sqrt{2}-1\bigr)}{21}+\frac{4 \Li_4\bigl(\frac{1}{\sqrt{2}}\bigr)}{7}
 -\frac{27 \Li_4\bigl(\frac{1}{2}\bigr)}{28}-\frac{59\Li_4\bigl(\bigl( \sqrt{2}-1\bigr)^{2}\bigr)}{14} \nonumber\\ \hphantom{\mathscr{S}_4(-4)=}{}
 +\frac{19\Li_4\bigl(\bigl( \sqrt{2}-1\bigr)^{4}\bigr)}{84}-\frac{2\Li_4\Bigl(\frac{1-\frac{1}{\sqrt{2}}}{2}\Bigr)}{21}
  +\frac{8\sqrt{2} L_{8,2}(3) \widetilde{\lambda }}{3} -4 \Li_3\biggl(\frac{1}{\sqrt{2}}\biggr) \widetilde{\lambda }
 \nonumber\\ \hphantom{\mathscr{S}_4(-4)=}{}
+\frac{7 \zeta (3) \widetilde{\lambda }}{16}
 +\frac{600 \lambda ^3 \widetilde{\lambda }+1224 \lambda ^2 \widetilde{\lambda }^2+96 \lambda \widetilde{\lambda }^3-752 \widetilde{\lambda }^4-177 \lambda ^4}{4032}\nonumber\\ \hphantom{\mathscr{S}_4(-4)=}{}
 -\frac{\pi ^2 \bigl(189 \lambda \widetilde{\lambda }-61 \widetilde{\lambda }^2-30 \lambda ^2\bigr)}{504}+\frac{11 \pi ^4}{7560}\in \mathfrak Z_4(8),
 \label{eq:S4(-4)}\\
\sqrt{2}\mathscr{S}_3\biggl( -\frac{1}{2} \biggr)= -80 \Li_3\biggl(\frac{1}{\sqrt{2}}\biggr)+64 \sqrt{2} L_{8,2}(3)+\frac{35 \zeta (3)}{4}-20 \Li_2\bigl(\sqrt{2}-1\bigr) \lambda\nonumber\\ \hphantom{\sqrt{2}\mathscr{S}_3\biggl( -\frac{1}{2} \biggr)=}{}
+10 \lambda ^2 \widetilde{\lambda }-10 \lambda \widetilde{\lambda }^2+\frac{5 \lambda ^3}{3}
-\frac{15 \pi ^2 \lambda }{4}\in \mathfrak Z_3(8),
 \label{eq:S3(-1/2)}\\
\sqrt{2}\mathscr{S}_4\biggl( -\frac{1}{2} \biggr)= \frac{1669 \Li_4\bigl(\frac{1}{2}\bigr)}{14}-\frac{2112\Li_4\bigl(1-\frac{1}{\sqrt{2}}\bigr)}{7} -\frac{704 \Li_4\bigl(\sqrt{2}-1\bigr)}{21}
+\frac{24 \Li_4\bigl(\frac{1}{\sqrt{2}}\bigr)}{7}
\nonumber\\ \hphantom{\sqrt{2}\mathscr{S}_4\biggl( -\frac{1}{2} \biggr)=}{}
+\frac{1510\Li_4\bigl(\bigl( \sqrt{2}-1\bigr)^{2}\bigr)}{7}
-\frac{475\Li_4\bigl(\bigl( \sqrt{2}-1\bigr)^{4}\bigr)}{42} +\frac{352\Li_4\Bigl(\frac{1-\frac{1}{\sqrt{2}}}{2}\Bigr)}{21}\nonumber\\  \hphantom{\sqrt{2}\mathscr{S}_4\biggl( -\frac{1}{2} \biggr)=}{}
-100 \Li_3\biggl(\frac{1}{\sqrt{2}}\biggr) \lambda+\frac{224\sqrt{2} L_{8,2}(3) \lambda}{3} +\frac{175 \zeta (3)\lambda }{16}\nonumber\\ \hphantom{\sqrt{2}\mathscr{S}_4\biggl( -\frac{1}{2} \biggr)=}{}
+\frac{2\bigl(99 \lambda ^3 \widetilde{\lambda }-297 \lambda ^2 \widetilde{\lambda }^2-132 \lambda \widetilde{\lambda }^3+299 \widetilde{\lambda }^4+309 \lambda ^4\bigr)}{63} \nonumber\\ \hphantom{\sqrt{2}\mathscr{S}_4\biggl( -\frac{1}{2} \biggr)=}{}
+\frac{\pi ^2 \bigl(1848 \lambda \widetilde{\lambda }-1336 \widetilde{\lambda }^2-2115 \lambda ^2\bigr)}{252} +\frac{397 \pi ^4}{3780}\in \mathfrak Z_4(8),
 \label{eq:S4(-1/2)}\\
\sqrt{5}\mathscr{S}_3\biggl( -\frac{16}{5} \biggr)= \frac{5 \Li_3\bigl(\frac{1}{5}\bigr)}{4}+\frac{27\Li_3\bigl(\frac{1}{\phi}\bigr)}{2} -10 \Li_3\biggl(\frac{1}{\sqrt{5}}\biggr)-\frac{27 \zeta (3)}{20}\nonumber\\ \hphantom{\sqrt{5}\mathscr{S}_3\biggl( -\frac{16}{5} \biggr)=}{}
+\frac{5\bigl[\Li_2\bigl(\frac{1}{5}\bigr)-4\Li_2\bigl(\frac{1}{\sqrt{5}}\bigr)\bigr]\mathscr L}{8} -\frac{9 \pd ^3 }{2} +\frac{27\pi ^2 \pd}{20}-\frac{5\pi ^2 \mathscr L}{16} \in \mathfrak Z_3(10),
 \label{eq:S3(-16/5)}\\
\sqrt{3}\mathscr{S}_3\biggl( -\frac{4}{3} \biggr)= -\frac{21 \Li_3\bigl(\frac{1}{3}\bigr)}{10} -\frac{7 \Li_3\bigl(\frac{1}{4}\bigr)}{40}-\Li_3\biggl(\frac{\sqrt{3}-1}{2}\biggr)+\frac{11\Li_3\bigl(1-\frac{\sqrt{3}}{2}\bigr)}{20} +\frac{9\Li_3\bigl(\frac{2-\sqrt{3}}{3}\bigr)}{5}
\nonumber\\ \hphantom{\sqrt{3}\mathscr{S}_3\biggl( -\frac{4}{3} \biggr)=}{}
-7 \Li_3\bigl(2-\sqrt{3}\bigr)
+\frac{24\Li_3\bigl(2 \sqrt{3}-3\bigr)}{5} +\frac{11\Li_3\bigl(3 \sqrt{3}-5\bigr)}{5} +\frac{3\sqrt{3} L_{12,4}(3)}{5}\nonumber\\ \hphantom{\sqrt{3}\mathscr{S}_3\biggl( -\frac{4}{3} \biggr)=}{}
+\frac{39 \zeta (3)}{10}+\frac{3\Li_2\bigl(\frac{1}{4}\bigr)\varLambda}{8} -3 \Li_2\biggl(\frac{\sqrt{3}-1}{2}\biggr) \varLambda-3 \Li_2\bigl(2-\sqrt{3}\bigr) \varLambda-\frac{17 \lambda ^2 \widetilde{\varLambda }}{80}
\nonumber\\ \hphantom{\sqrt{3}\mathscr{S}_3\biggl( -\frac{4}{3} \biggr)=}{}
+\frac{71 \lambda \widetilde{\varLambda }^2}{80}-\frac{3\lambda \varLambda \widetilde{\varLambda }}{4} -\frac{3 \varLambda ^2 \widetilde{\varLambda }}{20}
+\frac{21 \varLambda \widetilde{\varLambda }^2}{40}-\frac{209 \widetilde{\varLambda }^3}{240}+\frac{13 \lambda ^3}{80}+\frac{3 \lambda ^2 \varLambda }{8}+\frac{\varLambda ^3}{20}\nonumber\\ \hphantom{\sqrt{3}\mathscr{S}_3\biggl( -\frac{4}{3} \biggr)=}{}
+\frac{7 \pi ^2 \widetilde{\varLambda }}{40}-\frac{7 \pi ^2 \lambda }{20}-\frac{7 \pi ^2 \varLambda }{80}\in\mathfrak Z_{3}(12).
 \label{eq:S3(-4/3)}
\end{gather}
Here in \eqref{eq:S4(2)} and \eqref{eq:S4(4)}, the Dirichlet beta value
\[
\beta(4)\colonequals \sum_{n=0}^\infty\frac{(-1)^n}{(2n+1)^4}
\]
 is yet another generalization of Catalan's constant
 \[
 G\colonequals \sum_{n=0}^\infty\frac{(-1)^n}{(2n+1)^2};
  \]
  in \eqref{eq:S4(2)}, \eqref{eq:S4(3)}, \eqref{eq:S3(-4)}, \eqref{eq:S4(-4)}, \eqref{eq:S3(-1/2)}, \eqref{eq:S4(-1/2)}, and \eqref{eq:S3(-4/3)}, we have the Dirichlet $L$-values
\begin{gather*}
 L_{8,2}(3)\colonequals\sum_{n=0}^\infty\frac{(-1)^{n(n+1)/2}}{(2n+1)^3} , \\
 L_{8,4}(4)\colonequals\sum_{n=0}^{\infty} (-1)^n\left[ \frac{1}{(4n+1)^4} +\frac{1}{(4n+3)^4}\right], \\
  L_{3,2}(4)\colonequals\sum_{n=0}^{\infty} \left[ \frac{1}{(3n+1)^4} -\frac{1}{(3n+2)^4}\right] , \\
L_{12,4}(3)\colonequals \sum _{n=0}^{\infty } \left[\frac{1}{(12 n+1)^3}-\frac{1}{(12 n+5)^3}-\frac{1}{(12 n+7)^3}+\frac{1}{(12 n+11)^3}\right].
\end{gather*}

For each positive integer $k$ greater than $1$, it is also worth mentioning that the convergent series $ \mathscr S_k(z)$ in \eqref{eq:genChenSk} can be written as a generalized hypergeometric function\vspace{3mm}
\begin{align*}
\mathscr{S}_k(z)={_{k+1}}F_{k}\!\left( \left. \begin{array}{@{}c@{}}\smash[t]{
\overset{k-1}{\overbrace{\tfrac{1}{2},\dots,\tfrac{1}{2}}}},1,1 \\[4pt]
\smash[b]{\underset{k}{\underbrace{\tfrac{3}{2},\dots,\tfrac{3}{2}}}}
\end{array}\right\vert \frac{z}{4}\right),
\end{align*}

\vspace{3mm}

\noindent
which does not reduce to elementary expressions. (For $ k=2$, using {\tt FunctionExpand} in {\tt Mathematica} v14.0, one can check that
\begin{align*}
&\frac{1-w^2 }{w}{_{3}}F_{2}\Biggl( \begin{array}{@{}c@{}}\frac{1}{2},1,1 \\[4pt]
\frac{3}{2},\frac{3}{2} \\
\end{array}\Bigg\vert -\frac{1}{4} \biggl(\frac{1-w^2}{w}\biggr)^2\Biggr)
\\
&\qquad{}
=-2 [\Li_2(w)-\Li_2(-w)]-2 \log (w) \log \biggl(\frac{1-w}{1+w}\biggr)+\frac{\pi ^2}{2}
\end{align*}
holds when $0<w<1$, but reductions for larger integers $k$ are not automated yet.) Our results in Theorems \ref{thm:Li2Li3} and \ref{thm:ChenCMZV} also admit natural extensions to analytic continuations of these ${_{k+1}}F_{k}$ functions, outside the domain of convergence for the infinite series $ \mathscr S_k(z)$.

\section{Evaluations of Chen's series}\label{sec:Chen}

To evaluate \eqref{firstopen} and \eqref{secondopen}, we first convert them into integrals over polylogarithmic expressions, and then compute their integral representations
 by the function {\tt MZIntegrate}
 in Au's {\tt MultipleZetaValues} package \cite{Au2022a}.

\begin{proof}[Proof of Theorem \ref{thm:Chen1}]
 In view of the beta integral identity
 \begin{align*}
 \int_{0}^{1} \bigg( \frac{x}{1 + x^2} \bigg)^{2n+1} \frac{\D x}{1+x^2}
 = \frac{1}{ 4 (2 n+1) \binom{2 n}{n}}
\end{align*}
 together with the series bisection yielding
 \[ \sum_{n=0}^{\infty} \frac{z^{2n+1}}{(2n+1)^2}
 = \frac{\Li_2(z)-\Li_2(-z)}{2} , \]
 we may recast Chen's series \eqref{firstopen} into
 \begin{align}
 2\int_0^1 \frac{ \Li_2\bigl(\frac{x}{1+x^2}\bigr)-\Li_2\bigl(-\frac{x}{1+x^2}\bigr)}{1+x^2} \,\D x, \label{eq:firstopen_int}
 \end{align}
 by reversing the order of integration and infinite summation (which is justified by the dominated convergence theorem).
 Feeding
\begin{verbatim}
MZIntegrate[(2*(PolyLog[2, x/(1 + x^2)]
 - PolyLog[2, -x/(1 + x^2)]))/(1 + x^2), {x, 0, 1}]
\end{verbatim}
to {\tt Mathematica} after loading Au's {\tt MultipleZetaValues} package \cite{Au2022a}, we receive an output that is equivalent to the desired evaluation in \eqref{eq:Chen1sln}.
\end{proof}

\begin{proof}[Proof of Theorem \ref{thm:Chen2}]As a variation on \eqref{eq:firstopen_int}, we may represent
 \eqref{secondopen} by \begin{align}
\frac{2}{{\rm i}}\int_0^1 \frac{ \Li_2\bigl(\frac{{\rm i}x}{1+x^2}\bigr)-\Li_2\bigl(-\frac{{\rm i}x}{1+x^2}\bigr)}{1+x^2}\, \D x.\label{eq:secondopen_int}
\end{align}Unfortunately, the function {\tt MZIntegrate} in the current version (v1.2.0) of Au's {\tt Multiple\-Zeta\-Values} package \cite{Au2022a} does not automatically evaluate \eqref{eq:secondopen_int} in terms of CMZVs.

Nevertheless, we may reincarnate \eqref{eq:secondopen_int} into a form that is amenable to {\tt MZIntegrate}, by considering symmetries and deforming contours. Concretely speaking, the inversion $ x\mapsto\frac1x$ and reflection $x\mapsto -x$ symmetries allow us to identify \eqref{eq:secondopen_int} with
\begin{align}
&\frac{1}{{\rm i}}\int_0^\infty\ \frac{ \Li_2\bigl(\frac{{\rm i}x}{1+x^2}\bigr)-\Li_2\bigl(-\frac{{\rm i}x}{1+x^2}\bigr)}{1+x^2} \,\D x
=\frac{1}{\pi}\int_{-\infty}^\infty\ \frac{ \Li_2\bigl(\frac{{\rm i}x}{1+x^2}\bigr)-\Li_2\bigl(-\frac{{\rm i}x}{1+x^2}\bigr)}{1+x^2} \log\frac{x}{{\rm i}}\,\D x.
\tag{\ref{eq:secondopen_int}$'$}
\end{align}
Subsequently, we may close the contour upwards while exploiting a jump relation
\[
\Li_2(\xi+{\rm i}0^+)-\Li_2(\xi-{\rm i}0^+)=2\pi {\rm i}\log \xi
\]
 for $ \xi\in(1,\infty)$, which yields
\begin{align}
2{\rm i}\int_{{\rm i}/\phi}^{\rm i}\frac{\log\bigl( -\frac{{\rm i}z}{1+z^{2}} \bigr)}{1+z^{2}}\log\frac{z}{{\rm i}}\D z+2{\rm i}\int_{{\rm i}}^{{\rm i}\phi}\frac{\log\bigl( \frac{{\rm i}z}{1+z^{2}} \bigr)}{1+z^{2}}\log\frac{z}{{\rm i}}\,\D z
\tag{\ref{eq:secondopen_int}$''$}\label{eq:secondopen_int''}
\end{align}
for $ \phi\colonequals \frac{\sqrt{5}+1}{2}$. Now, throwing the last two integrals as
\begin{verbatim}
FullSimplify[MZIntegrate[2*I*(Log[(-I)*(z/(1 +
z^2))]/(1 + z^2))*Log[z/I]*D[I*((Sqrt[5] - 1)/2)*(1 - t) +
I*t, t] //.z -> I*((Sqrt[5] - 1)/2)*(1 - t) +
I*t, {t, 0, 1}] + MZIntegrate[2*I*(Log[I*(z/(1 + z^2))]/(1 +
z^2))*Log[z/I]*D[I*((Sqrt[5] + 1)/2)*t + I*(1 -
t), t] //. z -> I*((Sqrt[5] + 1)/2)*t + I*(1 -
t), {t, 0, 1}]]
\end{verbatim}
 into {\tt Mathematica}, we may confirm \eqref{eq:Chen2sln}.
\end{proof}

\section{Further generalizations of Chen's series\label{sec:genChen}}

\begin{proof}[Proof of Theorem \ref{thm:Li2Li3}]
By a natural extension of \eqref{eq:secondopen_int''}, we may equate the infinite series in~\eqref{eq:genChenLi2Li3} with
\begin{align}
&{}2{\rm i}\int_{{\rm i}w}^{\rm i}\frac{\log\bigl( \frac{1-w^{2}}{{\rm i}w}\frac{z}{1+z^{2}} \bigr)}{1+z^{2}}\log\frac{z}{{\rm i}}\,\D z+2{\rm i}\int_{{\rm i}}^{{\rm i}/w}\frac{\log\bigl(-\frac{1-w^{2}}{{\rm i}w} \frac{z}{1+z^{2}} \bigr)}{1+z^{2}}\log\frac{z}{{\rm i}}\,\D z.
\label{eq:genChen_intC}
\end{align}
Integrals of this type can be handled by Panzer's {\tt HyperInt} package \cite{Panzer2015} for {\tt Maple}. For instance, one may type
\begin{verbatim}
f := (w, z) -> 2*I*log(-I*(1 - w^2)*z/(w*(1 + z^2)))
*log(-I*z)/(1 + z^2);
g := (w, z) -> 2*I*log((1 - w^2)*z*I/(w*(1 + z^2)))
*log(-I*z)/(1 + z^2);
fibrationBasis(fibrationBasis(fibrationBasis
(hyperInt(f(w, z), [z = 0 .. I]), [w])
- fibrationBasis(hyperInt(f(w, z), [z = 0 .. I*w]), [w])
- fibrationBasis(hyperInt(g(w, z), [z = 0 .. I]), [w])
+ fibrationBasis(hyperInt(g(w, z), [z = 0 .. I/w]), [w])
+ (2*polylog(3, (1 + w)/2) - 2*polylog(3, (1 - w)/2)
- 2*polylog(3, 1/2*(1 + 1/w)) + 2*polylog(3, 1/2*(1 - 1/w))
- (polylog(2, (1 + w)/2) - polylog(2, (1 - w)/2)
+ polylog(2, (1 + 1/w)/2) - polylog(2, (1 - 1/w)/2))*log(w)
- Pi*delta[w]*log((1 + w)/2)*log((1 + 1/w)/2)*I), [w]));
\end{verbatim} and check that the output is zero. This verifies \eqref{eq:genChenLi2Li3} under the condition that $\I w>0 $, since the symbol {\tt delta[w]} in the last line of our code represents $ \I w/|{\I w}|$ when $\I w\neq0
$ \cite[formula~(3.6)]{Panzer2015}. After this, the case where $\I w=0$ can be determined by a limit procedure and a scrupulous analysis for the values of polylogarithms on their branch cuts. \end{proof}

\begin{Remark}One may recover the evaluations in Theorems \ref{thm:Chen1} and \ref{thm:Chen2} by setting $ w={\rm e}^{\pi {\rm i}/6}$ and ${w=\frac{\sqrt{5}-1}{2} }$ in~\eqref{eq:genChenLi2Li3} before invoking the function {\tt MZExpand} in Au's {\tt MultipleZetaValues} package~\cite{Au2022a}.\end{Remark}

Now, so long as $ \I({\rm i}w)>0$ and $ \I({\rm i}/w)>0$, we may upgrade our derivations for \eqref{eq:genChen_intC} into
\begin{align}
\sum _{n=0}^{\infty } \frac{(-1)^n }{(2 n+1)^k \binom{2 n}{n}}\biggl(\frac{1-w^2}{w}\biggr)^{2 n+1}
&{}=2{\rm i}\int_0^1 \frac{ \Li_{k-1}\bigl(\frac{1-w^2}{{\rm i}w}\frac{x}{1+x^2}\bigr)-\Li_{k-1}\bigl(-\frac{1-w^2}{{\rm i}w}\frac{x}{1+x^2}\bigr)}{1+x^2} \,\D x\nonumber\\
&{}=\frac{2{\rm i}}{(k-2)!}\int_{{\rm i}w}^{\rm i}\frac{\log^{k-2}\bigl( \frac{1-w^{2}}{{\rm i}w}\frac{z}{1+z^{2}} \bigr)}{1+z^{2}}\log\frac{z}{{\rm i}}\,\D z\nonumber\\
&\quad{}+\frac{2{\rm i}}{(k-2)!}\int_{{\rm i}}^{{\rm i}/w}\frac{\log^{k-2}\bigl(-\frac{1-w^{2}}{{\rm i}w} \frac{z}{1+z^{2}} \bigr)}{1+z^{2}}\log\frac{z}{{\rm i}}\,\D z,
\label{eq:genChen'_intC}
\end{align}
after recalling a jump relation $ \Li_s(\xi+{\rm i}0^+)-\Li_s(\xi-{\rm i}0^+)=\frac{2\pi {\rm i}}{(s-1)!}\log^{s -
 1} \xi$ for $ \xi\in(1,\infty)$.

To evaluate the last two contour integrals in \eqref{eq:genChen'_intC}, we need generalized polylogarithms (GPLs), which are defined recursively by an integral along a straight line segment
\begin{align}
G(\alpha_{1},\dots,\alpha_n;z)\colonequals\int_0^z\frac{G(\alpha_2,\dots,\alpha_n;x)\,\D x}{x-\alpha_1}\label{eq:GPL_rec}
\end{align}
for $ |\alpha_1|+\dots+|\alpha_n|\neq0$, with the boundary conditions that
\begin{align}
G(\underset{m }{\underbrace{0,\dots,0 }};z)\colonequals\frac{\log^mz}{m!},\qquad G(-\!\!-;z)\colonequals1.\label{eq:GPL_log_bd}
\end{align}
 One can convert GPLs to (analytic continuations of) MPLs [cf. \eqref{eq:CMZV_Li_defn}] via the following equation:
\begin{gather}
\vphantom{\underset{1}{\underbrace{0}}}G(\smash[b]{\underset{a_1-1 }{\underbrace{0,\dots,0 }}},\widetilde \alpha_1,\smash[b]{\underset{a_2-1 }{\underbrace{0,\dots,0 }}},\widetilde \alpha_2,\dots,\smash[b]{\underset{a_n-1 }{\underbrace{0,\dots,0 }}},\widetilde \alpha_n;z)
=(-1)^n\Li_{a_1,\dots,a_n}\biggl( \frac{z}{\widetilde \alpha_1} ,\frac{\widetilde \alpha_1}{\widetilde \alpha_2},\dots, \frac{\widetilde \alpha_{n-1}}{\widetilde \alpha_n}\biggr),\label{eq:GPL_MPL}\!
\end{gather}
where $ \prod_{j=1}^n\widetilde \alpha_j\neq0$. The corresponding integral relations date back to the work of E.E.~Kummer \cite{Kummer1840p1,Kummer1840p2,Kummer1840p3}, while the GPL-MPL\ conversion \eqref{eq:GPL_MPL} can be found in \cite[Section~4.2]{BorweinBradleyBroadhurstLisonek2001}. This allows us to reformulate \eqref{eq:Zk(N)_defn} into \begin{align}
\mathfrak Z_{k}(N)\colonequals\Span_{\mathbb Q}\left\{G(z_1,\dots,z_k;z)\left|\begin{matrix}z_1^N,\dots,z_{k}^N\in\{0,1\},\\z_1\neq1,\, z_{k}\neq0,\, z^{N}=1\end{matrix}\right.\right\}.\tag{\ref{eq:Zk(N)_defn}$'$}\label{eq:Zk(N)_defn'}
\end{align}

In view of \eqref{eq:GPL_rec} and \eqref{eq:GPL_log_bd}, we have $ G(\pm1;z/{\rm i})=\log(1\pm {\rm i}z)$ and $ G(0;z/{\rm i})=\log\frac{z}{{\rm i}}$. The\ function \smash{$ \log^{k-2}\bigl( \pm\frac{z/{\rm i}}{1+z^{2}} \bigr)\log\frac{z}{{\rm i}}$} is a $ \mathbb Q$-linear combination of GPLs in the form of $ G(\alpha_1,\dots,\alpha_{k-1};z/{\rm i})$, where $\alpha_1,\dots,\alpha_{j}\in\{-1,0,1\} $, as can be seen by repeated invocations of a shuffle product \cite{Blumlein2004,Hoffman2000}
\begin{align*}
G(\alpha;t)G(\beta_1,\dots,\beta_r;t)
&{}=G(\alpha,\beta_1,\dots,\beta_r;t)+\sum_{j=1}^{r-1}G(\beta_{1},\dots,\beta_j,\alpha,\beta_{j+1},\dots ,\beta_r;t)\\
&\quad{}+G(\beta_1,\dots,\beta_r,\alpha;t).
\end{align*}
Therefore, the GPL recursion in \eqref{eq:GPL_rec} tells us that
\begin{align}
&{\rm i}\int_{{\rm i}w}^{\rm i}\frac{\log^{k-2}\bigl( \frac{z/{\rm i}}{1+z^{2}} \bigr)}{1+z^{2}}\log\frac{z}{{\rm i}}\,\D z\in{}\mathfrak Z_{k}(2)+\mathfrak H_k^{(w)}(2),\label{eq:ZkHw}\\
&{\rm i}\int_{{\rm i}}^{{\rm i}/w}\frac{\log^{k-2}\bigl(- \frac{z/{\rm i}}{1+z^{2}} \bigr)}{1+z^{2}}\log\frac{z}{{\rm i}}\,\D z\in{}\mathfrak Z_{k}(2)+\mathfrak H_k^{(1/w)}(2),\label{eq:ZkHw'}
\end{align}
where
\begin{align}
&\mathfrak H_k^{(z)}(2)\colonequals{}\Span_{\mathbb Q}\left \{(\pi {\rm i})^{r-\ell}G(\alpha_1,\dots,\alpha_\ell;z)\left|\begin{matrix}\alpha_1^{2},\dots,\alpha _\ell^{2}\in\{0,1\},\\0\leq\ell\leq r,\, \alpha_{1}\neq z\end{matrix}\right.\right\}\nonumber\\
&\qquad \xlongequal{\text{\cite[p.\ 24]{DuhrDulat2019}}}{}\Span_{\mathbb Q}\left \{(\pi {\rm i})^{r-\ell}G(\alpha_1,\dots,\alpha_m;z)\log^{\ell-m}z\left|\begin{matrix}\alpha_1^{2},\dots,\alpha _m^{2}\in\{0,1\},\\0\leq m\leq\ell\leq r,\\\alpha_{1}\neq z,\,\alpha_m\neq0\end{matrix}\right.\right\}
\label{eq:HPL_space}\end{align}
is a $\mathbb Q $-vector space of \textit{hyperlogarithms} \cite{Maitre2005,Maitre2012}.
\begin{proof}[Proof of Theorem \ref{thm:ChenCMZV}]
 (a) For the left-hand side of \eqref{eq:genChenZa}, there are four
possible choices of $w$, namely, $ \pm {\rm e}^{2m\pi {\rm i}/N}$ and $ \pm {\rm e}^{-2m\pi {\rm i}/N}$. Without loss of generality, we may assume that $\I\bigl({\rm i}{\rm e}^{2m\pi {\rm i}/N}\bigr)>0$ and $ \I\bigl({\rm i}{\rm e}^{-2m\pi {\rm i}/N}\bigr)>0$, so that $w={\rm e}^{2m\pi {\rm i}/N} $ is applicable to \eqref{eq:genChen'_intC}. Consequently, the right-hand sides of both \eqref{eq:ZkHw} and \eqref{eq:ZkHw'} are subsets of $\mathfrak Z_{k}(\lcm(2,N)) $, as one can check by comparing~\eqref{eq:Zk(N)_defn'} and \eqref{eq:HPL_space} against the fact that $ \pi {\rm i}\in\mathfrak Z_1(N)$ for $ N-2\in \mathbb Z_{>0}$ \cite[Lemma~4.1]{Au2022a}. In addition, the relation
\begin{align*}
&{}\log^k\bigg(\frac{1-w^{2}}{w}\bigg)=[G(-1;w)-G(0;w)+G(1;w)]^{k}\in{}\mathfrak H^{(w)}_k(2)\subseteq \mathfrak Z_{k}(\lcm(2,N))
\end{align*}
also holds for $ w={\rm e}^{2m\pi {\rm i}/N} $. Therefore, the right-hand side of \eqref{eq:genChenZa} is a result of Goncharov's filtration $ \mathfrak Z_j(M)\mathfrak
Z_{ k}(M)\subseteq \mathfrak
Z_{j+ k}(M)$ \cite[Section~1.2]{Goncharov1998} for $M=\lcm(2,N)$.

(b) For $ N-2\in \mathbb Z_{>0}$ and $ w\neq0$, we have an embedding $ \mathfrak H_k^{(w)}(2)\subseteq\mathfrak Z_k(N)$ if $ \log w\in \mathfrak Z_k(N)$ and \begin{verbatim}IterIntDoableQ[{0, 1/w, -1/w}]\end{verbatim} returns a positive divisor of $N$ in Au's {\tt MultipleZetaValues} package \cite{Au2022a}. Particular cases of such embeddings include{\allowdisplaybreaks
\begin{alignat*}{3}
  &\mathfrak H_k^{(2)}(2)\subseteq{} \mathfrak Z_k(6),\qquad&& \mathfrak H_k^{(1/2)}(2)\subseteq{} \mathfrak Z_k(6), &\\
 & \mathfrak H_k^{(\sqrt{2}+1)}(2)\subseteq{} \mathfrak Z_k(8),\qquad&& \mathfrak H_k^{(\sqrt{2}-1)}(2)\subseteq{} \mathfrak Z_k(8),&\\
  &\mathfrak H_k^{(\sqrt{2})}(2)\subseteq{} \mathfrak Z_k(8),\qquad&& \mathfrak H_k^{(1/\sqrt{2})}(2)\subseteq{} \mathfrak Z_k(8), &\\
 & \mathfrak H_k^{(\phi)}(2)\subseteq{} \mathfrak Z_k(10),\qquad&& \mathfrak H_k^{(1/\phi)}(2)\subseteq{} \mathfrak Z_k(10), &\\
  &\mathfrak H_k^{(\sqrt{5})}(2)\subseteq{} \mathfrak Z_k(10),\qquad&& \mathfrak H_k^{(1/\sqrt{5})}(2)\subseteq{} \mathfrak Z_k(10),&\\
 & \mathfrak H_k^{(\sqrt{3})}(2)\subseteq{} \mathfrak Z_k(12),\qquad&& \mathfrak H_k^{(1/\sqrt{3})}(2)\subseteq{} \mathfrak Z_k(12).&
\end{alignat*}
}Meanwhile, we also have
\begin{align*}
\log\bigg(\frac{1-w^{2}}{w}\bigg)=\begin{cases}\log\frac{3}{2}\in\mathfrak Z_1(6) & \text{if }w=\frac{1}{2},\ \\
\log2\in\mathfrak Z_1(2)\subset\mathfrak Z_1(8) & \text{if }w=\sqrt{2}-1, \\\log\frac{1}{\sqrt{2}}\in\mathfrak Z_1(2)\subset\mathfrak Z_1(8)&\text{if }w=\frac{1}{\sqrt{2}},\\
0 & \text{if }w=\frac{1}{\phi} ,\\\log\frac{4}{\sqrt{5}}\in\mathfrak Z_1(10)& \text{if }w=\frac{1}{\sqrt{5}} ,\\\log\frac{2}{\sqrt{3}}\in\mathfrak Z_1(6)\subset\mathfrak Z_1(12)&\text{if }w=\frac{1}{\sqrt{3}}.
\end{cases}
\end{align*} Therefore, the right-hand sides of \eqref{eq:genChen'a}--\eqref{eq:genChen'd} all result from Goncharov's filtrations \cite[Section~1.2]{Goncharov1998} $ \mathfrak Z_j(N) \mathfrak
Z_{ k}(N)\subseteq \mathfrak Z_{j+ k}(N)$ for $ N\in\{6,$ $8,$ $10,$ $12\}$.
\end{proof}

\begin{Remark}Evaluating the integral representations in part (a) explicitly for $w={\rm e}^{\pi {\rm i}/4} $, ${w={\rm e}^{\pi {\rm i}/3}}$, and $w={\rm e}^{\pi {\rm i}/2}$ in Au's {\tt MultipleZetaValues} package \cite{Au2022a}, we get \eqref{eq:S3(2)}--\eqref{eq:S4(4)}. A~similar service on part (b) brings us \eqref{eq:S3(-9/4)}--\eqref{eq:S3(-4/3)}.
\end{Remark}\begin{Remark}To symbolically check any individual case among \eqref{eq:S3(2)}--\eqref{eq:S3(-4/3)}, one simply implements \eqref{eq:genChen'_intC} as
\begin{verbatim}((2*I)/(k - 2)!)*
(MZIntegrate[(Log[((1 - w^2)/(I*w))*(z/(1 + z^2))]^(k - 2)
/(1 + z^2))*Log[z/I]*D[I*w*(1 - t) + I*t, t]
//. z -> I*w*(1 - t) + I*t, {t, 0, 1}] +
MZIntegrate[(Log[(-((1 - w^2)/(I*w)))*(z/(1 + z^2))]^(k - 2)
/(1 + z^2))*Log[z/I]*D[I*(1 - t) + (I/w)*t, t]
//. z -> I*(1 - t) + (I/w)*t, {t, 0, 1}])\end{verbatim} in \texttt{Mathematica}, after choosing appropriate values for $k$ and $w$. For example, we need $k=3$ and $w={\rm e}^{\pi {\rm i}/4}$ for the analytic expression of $ \frac{1}{\sqrt{2}}\frac{1-w^2}{w}=-{\rm i}$ times \eqref{eq:S3(2)}. \end{Remark}

So far, we have limited the scope of this section to $ \mathscr{S}_k(z)$ where $k-1\in\mathbb Z_{>0}$. Here, we point out that both
\[
\mathscr{S}_1(z)=\frac{4}{\sqrt{z(4-z)}}\arcsin\frac{\sqrt{z}}{2}, \qquad
\mathscr{S}_0(z)=\frac{4 }{\sqrt{4-z} (4-z)}\left(\sqrt{4-z}+\sqrt{z} \arcsin\frac{\sqrt{z}}{2}\right)
\] are classical results, while the closed forms of $ \mathscr{S}_k(z)$ for negative integers $k$ can be deduced from the well-studied series of Ap\'ery--Lehmer type \cite{DysonFrankelGlasser2013,Glasser2012,Young2022}
\begin{align*}
 S_{\ell}(z)\colonequals \sum_{n=1}^\infty\frac{n^\ell z^n}{\binom{2n}{n}},
\end{align*}
where $ \ell\in\{1,2,\dots,|k|\}$.

\subsection*{Acknowledgements}
 J.M.C.~gratefully acknowledges support by a Killam Postdoctoral Fellowship from the Killam Trusts
 and wants to thank Karl Dilcher for useful feedback related to the sums investigated in this article. The research of Y.Z.~was supported in part by the Applied Mathematics Program within the Department of Energy
(DOE) Office of Advanced Scientific Computing Research (ASCR) as part of the Collaboratory on
Mathematics for Mesoscopic Modeling of Materials (CM4). The authors are thankful for the expert reviewer feedback that has led to numerous
 improvements to this paper.

\pdfbookmark[1]{References}{ref}
\LastPageEnding


\begin{thebibliography}{99}
\footnotesize\itemsep=0pt

\bibitem{Ablinger2017}
Ablinger J., Discovering and proving infinite binomial sums identities,
  \href{https://doi.org/10.1080/10586458.2015.1116028}{\textit{Exp. Math.}} \textbf{26} (2017), 62--71, \href{https://arxiv.org/abs/1507.01703}{arXiv:1507.01703}.

\bibitem{ABRS2014}
Ablinger J., Bl\"umlein J., Raab C.G., Schneider C., Iterated binomial sums and
  their associated iterated integrals, \href{https://doi.org/10.1063/1.4900836}{\textit{J.~Math. Phys.}} \textbf{55}
  (2014), 112301, 57~pages, \href{https://arxiv.org/abs/1407.1822}{arXiv:1407.1822}.

\bibitem{Au2022a}
Au K.C., Iterated integrals and multiple polylogarithm at algebraic arguments,  \href{https://arxiv.org/abs/2201.01676}{arXiv:2201.01676},
  {s}oftware available at
  \url{https://www.researchgate.net/publication/357601353_Mathematica_package_MultipleZetaValues}
  and
  \url{https://www.wolframcloud.com/obj/pisco125/MultipleZetaValues-1.2.0.paclet}.

\bibitem{Blumlein2004}
Bl\"umlein J., Algebraic relations between harmonic sums and associated
  quantities, \href{https://doi.org/10.1016/j.cpc.2003.12.004}{\textit{Comput. Phys. Comm.}} \textbf{159} (2004), 19--54,
  \href{https://arxiv.org/abs/hep-ph/0311046}{arXiv:hep-ph/0311046}.

\bibitem{BorweinBaileyGirgensohn2004}
Borwein J., Bailey D., Girgensohn R., Experimentation in mathematics.
  {C}omputational paths to discovery, \href{https://doi.org/10.1201/9781439864197}{A~K~Peters, Ltd.}, Natick, MA, 2004.

\bibitem{BorweinBradleyBroadhurstLisonek2001}
Borwein J.M., Bradley D.M., Broadhurst D.J., Lison\v{e}k P., Special values of
  multiple polylogarithms, \href{https://doi.org/10.1090/S0002-9947-00-02616-7}{\textit{Trans. Amer. Math. Soc.}} \textbf{353}
  (2001), 907--941, \href{https://arxiv.org/abs/math.CA/9910045}{arXiv:math.CA/9910045}.

\bibitem{BorweinBroadhurstKamnitzer2001}
Borwein J.M., Broadhurst D.J., Kamnitzer J., Central binomial sums, multiple
  {C}lausen values, and zeta values, \href{https://doi.org/10.1080/10586458.2001.10504426}{\textit{Experiment. Math.}} \textbf{10}
  (2001), 25--34, \href{https://arxiv.org/abs/hep-th/0004153}{arXiv:hep-th/0004153}.

\bibitem{Brown2009arXiv}
Brown F., On the periods of some {F}eynman integrals, \href{https://arxiv.org/abs/0910.0114}{arXiv:0910.0114}.

\bibitem{Brown2009a}
Brown F., The massless higher-loop two-point function, \href{https://doi.org/10.1007/s00220-009-0740-5}{\textit{Comm. Math.
  Phys.}} \textbf{287} (2009), 925--958, \href{https://arxiv.org/abs/0804.1660}{arXiv:0804.1660}.

\bibitem{Brown2009b}
Brown F., Multiple zeta values and periods of moduli spaces
  {$\overline{\mathfrak M}_{0,n}$}, \href{https://doi.org/10.24033/asens.2099}{\textit{Ann. Sci. \'Ec. Norm. Sup\'er.}}
  \textbf{42} (2009), 371--489, \href{https://arxiv.org/abs/math.AG/0606419}{arXiv:math.AG/0606419}.

\bibitem{CampbellChen2022}
Campbell J.M., Chen K.-W., Explicit identities for infinite families of series
  involving squared binomial coefficients, \href{https://doi.org/10.1016/j.jmaa.2022.126219}{\textit{J.~Math. Anal. Appl.}}
  \textbf{513} (2022), 126219, 23~pages.

\bibitem{CampbellLevrietoappear}
Campbell J.M., Levrie P., Proof of a~conjecture due to {C}hu on {G}osper-type
  sums, \href{https://doi.org/10.1007/s00010-023-00986-7}{\textit{Aequationes Math.}} \textbf{98} (2024), 1071--1079.

\bibitem{CampbellLevrieXuZhao2024}
Campbell J.M., Levrie P., Xu C., Zhao J., On a~problem involving the squares of
  odd harmonic numbers, \href{https://doi.org/10.1007/s11139-023-00765-7}{\textit{Ramanujan~J.}} \textbf{63} (2024), 387--408,
  \href{https://arxiv.org/abs/2206.05026}{arXiv:2206.05026}.

\bibitem{Chen2010}
Chen H., Excursions in classical analysis. Pathways to advanced problem solving
  and undergraduate research, Classr. Res. Mater. Ser., \href{https://doi.org/10.1090/clrm/035}{Mathematical
  Association of America}, Washington, DC, 2010.

\bibitem{Chu2021}
Chu W., Further {A}p\'ery-like series for {R}iemann zeta function,
  \href{https://doi.org/10.1134/s0001434621010168}{\textit{Math. Notes}} \textbf{109} (2021), 136--146.

\bibitem{ChudnovskyChudnovsky1998}
Chudnovsky D.V., Chudnovsky G.V., Classification of hypergeometric identities
  for {$\pi$} and other logarithms of algebraic numbers, \href{https://doi.org/10.1073/pnas.95.6.2744}{\textit{Proc. Natl.
  Acad. Sci. USA}} \textbf{95} (1998), 2744--2749.

\bibitem{DavydychevKalmykov2001}
Davydychev A.I., Kalmykov M.Yu., New results for the {$\epsilon$}-expansion of
  certain one-, two- and three-loop {F}eynman diagrams, \href{https://doi.org/10.1016/S0550-3213(01)00095-5}{\textit{Nuclear
  Phys.~B}} \textbf{605} (2001), 266--318, \href{https://arxiv.org/abs/hep-th/0012189}{arXiv:hep-th/0012189}.

\bibitem{DavydychevKalmykov2004}
Davydychev A.I., Kalmykov M.Yu., Massive {F}eynman diagrams and inverse binomial
  sums, \href{https://doi.org/10.1016/j.nuclphysb.2004.08.020}{\textit{Nuclear Phys.~B}} \textbf{699} (2004), 3--64,
  \href{https://arxiv.org/abs/hep-th/0303162}{arXiv:hep-th/0303162}.

\bibitem{DuhrDulat2019}
Duhr C., Dulat F., Poly{L}og{T}ools~-- polylogs for the masses, \href{https://doi.org/10.1007/jhep08(2019)135}{\textit{J.~High
  Energy Phys.}} \textbf{2019} (2019), no.~8, 135, 56~pages,
  \href{https://arxiv.org/abs/1904.07279}{arXiv:1904.07279}.

\bibitem{DysonFrankelGlasser2013}
Dyson F.J., Frankel N.E., Glasser M.L., Lehmer's interesting series,
  \href{https://doi.org/10.4169/amer.math.monthly.120.02.116}{\textit{Amer. Math. Monthly}} \textbf{120} (2013), 116--130,
  \href{https://arxiv.org/abs/1009.4274}{arXiv:1009.4274}.

\bibitem{Glasser2012}
Glasser M.L., A~generalized {A}p\'ery series, \textit{J.~Integer Seq.}
  \textbf{15} (2012), 12.4.3, 7~pages, \href{https://arxiv.org/abs/1204.1078}{arXiv:1204.1078}.

\bibitem{Goncharov1998}
Goncharov A.B., Multiple polylogarithms, cyclotomy and modular complexes,
  \href{https://doi.org/10.4310/MRL.1998.v5.n4.a7}{\textit{Math. Res. Lett.}} \textbf{5} (1998), 497--516, \href{https://arxiv.org/abs/1105.2076}{arXiv:1105.2076}.

\bibitem{Hoffman2000}
Hoffman M.E., Quasi-shuffle products, \href{https://doi.org/10.1023/A:1008791603281}{\textit{J.~Algebraic Combin.}} \textbf{11}
  (2000), 49--68, \href{https://arxiv.org/abs/math.QA/9907173}{arXiv:math.QA/9907173}.

\bibitem{KalmykovVeretin2000}
Kalmykov M.Yu., Veretin O., Single-scale diagrams and multiple binomial sums,
  \href{https://doi.org/10.1016/S0370-2693(00)00574-8}{\textit{Phys. Lett.~B}} \textbf{483} (2000), 315--323,
  \href{https://arxiv.org/abs/hep-th/0004010}{arXiv:hep-th/0004010}.

\bibitem{Kalmykov2007}
Kalmykov M.Yu., Ward B.F.L., Yost S.A., On the all-order {$\epsilon$}-expansion
  of generalized hypergeometric functions with integer values of parameters,
  \href{https://doi.org/10.1088/1126-6708/2007/11/009}{\textit{J.~High Energy Phys.}} \textbf{2007} (2007), no.~11, 009, 13~pages,
  \href{https://arxiv.org/abs/0708.0803}{arXiv:0708.0803}.

\bibitem{Kummer1840p1}
Kummer E.E., Ueber die {T}ranscendenten, welche aus wiederholten
  {I}ntegrationen rationaler {F}ormeln entstehen, \href{https://doi.org/10.1515/crll.1840.21.74}{\textit{J.~Reine Angew.
  Math.}} \textbf{21} (1840), 74--90.

\bibitem{Kummer1840p2}
Kummer E.E., Ueber die {T}ranscendenten, welche aus wiederholten
  {I}ntegrationen rationaler {F}ormeln entstehen. ({F}ortsetzung),
  \href{https://doi.org/10.1515/crll.1840.21.193}{\textit{J.~Reine Angew. Math.}} \textbf{21} (1840), 193--225.

\bibitem{Kummer1840p3}
Kummer E.E., Ueber die {T}ranscendenten, welche aus wiederholten
  {I}ntegrationen rationaler {F}ormeln entstehen. ({F}ortsetzung),
  \href{https://doi.org/10.1515/crll.1840.21.328}{\textit{J.~Reine Angew. Math.}} \textbf{21} (1840), 328--371.

\bibitem{Laporta:2017okg}
Laporta S., High-precision calculation of the 4-loop contribution to the
  electron $g-2$ in {QED}, \href{https://doi.org/10.1016/j.physletb.2017.06.056}{\textit{Phys. Lett.~B}} \textbf{772} (2017),
  232--238, \href{https://arxiv.org/abs/1704.06996}{arXiv:1704.06996}.

\bibitem{LaportaRemiddi1996}
Laporta S., Remiddi E., The analytical value of the electron $(g-2)$ at order
  $\alpha^3$ in {QED}, \href{https://doi.org/10.1016/0370-2693(96)00439-X}{\textit{Phys. Lett.~B}} \textbf{379} (1996), 283--291,
  \href{https://arxiv.org/abs/hep-ph/9602417}{arXiv:hep-ph/9602417}.

\bibitem{Lehmer1985}
Lehmer D.H., Interesting series involving the central binomial coefficient,
  \href{https://doi.org/10.2307/2322496}{\textit{Amer. Math. Monthly}} \textbf{92} (1985), 449--457.

\bibitem{Maitre2005}
Ma\^itre D., {HPL}, a~{M}athematica implementation of the harmonic
  polylogarithms, \href{https://doi.org/10.1016/j.cpc.2005.10.008}{\textit{Comput. Phys. Comm.}} \textbf{174} (2006), 222--240,
\href{https://arxiv.org/abs/hep-ph/0507152}{arXiv:hep-ph/0507152},  {s}oftware available at \url{https://www.physik.uzh.ch/data/HPL/}.

\bibitem{Maitre2012}
Ma\^itre D., Extension of {HPL} to complex arguments, \href{https://doi.org/10.1016/j.cpc.2011.11.015}{\textit{Comput. Phys.
  Comm.}} \textbf{183} (2012), 846, \href{https://arxiv.org/abs/hep-ph/0703052}{arXiv:hep-ph/0703052}.

\bibitem{Panzer2015}
Panzer E., Algorithms for the symbolic integration of hyperlogarithms with
  applications to {F}eynman integrals, \href{https://doi.org/10.1016/j.cpc.2014.10.019}{\textit{Comput. Phys. Comm.}}
  \textbf{188} (2015), 148--166, \href{https://arxiv.org/abs/#2}{arXiv:1403.3385}.

\bibitem{Schnetz2018}
Schnetz O., The {G}alois coaction on the electron anomalous magnetic moment,
  \href{https://doi.org/10.4310/cntp.2018.v12.n2.a4}{\textit{Commun. Number Theory Phys.}} \textbf{12} (2018), 335--354,
  \href{https://arxiv.org/abs/1711.05118}{arXiv:1711.05118}.

\bibitem{SunZhou2024sum3k4k}
Sun Z.-W., Zhou Y., Evaluations of {$\sum_{k=1}^\infty
  \frac{x^k}{k^2\binom{3k}{k}}$} and related series, \href{https://arxiv.org/abs/2401.12083}{arXiv:2401.12083}.

\bibitem{vanderPoorten1980}
van~der Poorten A.J., Some wonderful formulae {$\ldots \ $}footnotes to
  {A}p\'ery's proof of the irrationality of {$\zeta (3)$}, in S\'eminaire
  {D}elange-{P}isot-{P}oitou, 20e Ann\'ee: 1978/1979. {T}h\'eorie des Nombres,
  {F}asc.~2, Secr\'etariat Math., Paris, 1980, Exp. No.~29, 7~pages.

\bibitem{WangXu2021}
Wang W., Xu C., Alternating multiple zeta values, and explicit formulas of some
  {E}uler--{A}p\'ery-type series, \href{https://doi.org/10.1016/j.ejc.2020.103283}{\textit{European J.~Combin.}} \textbf{93}
  (2021), 103283, 25~pages, \href{https://arxiv.org/abs/1909.02943}{arXiv:1909.02943}.

\bibitem{Weinzierl2004bn}
Weinzierl S., Expansion around half-integer values, binomial sums, and inverse
  binomial sums, \href{https://doi.org/10.1063/1.1758319}{\textit{J.~Math. Phys.}} \textbf{45} (2004), 2656--2673,
  \href{https://arxiv.org/abs/hep-ph/0402131}{arXiv:hep-ph/0402131}.

\bibitem{Young2022}
Young P.T., From {M}adhava--{L}eibniz to {L}ehmer's limit, \href{https://doi.org/10.1080/00029890.2022.2051405}{\textit{Amer. Math.
  Monthly}} \textbf{129} (2022), 524--538.

\bibitem{Zhou2022mkMpl}
Zhou Y., Hyper-{M}ahler measures via {G}oncharov--{D}eligne cyclotomy,
  \href{https://arxiv.org/abs/2210.17243}{arXiv:2210.17243}.

\bibitem{Zhou2023SunCMZV}
Zhou Y., Sun's series via cyclotomic multiple zeta values, \href{https://doi.org/10.3842/SIGMA.2023.074}{\textit{SIGMA}}
  \textbf{19} (2023), 074, 20~pages, \href{https://arxiv.org/abs/2306.04638}{arXiv:2306.04638}.

\bibitem{Zucker1985}
Zucker I.J., On the series {$\sum^\infty_{k=1}(^{2k}_{\;k})^{-1}k^{-n}$} and
  related sums, \href{https://doi.org/10.1016/0022-314X(85)90019-8}{\textit{J.~Number Theory}} \textbf{20} (1985), 92--102.

\end{thebibliography}
\end{document}